\newtheorem{theorem}{Theorem}
\newtheorem{lemma}[theorem]{Lemma}
\newtheorem{corollary}[theorem]{Corollary}
\theoremstyle{definition}
\newtheorem*{definition*}{Definition}
\newtheorem{proposition}[theorem]{Proposition}
\newtheorem*{remark}{Remark}
\newcommand{\N}{{\mathbb{N}}}
\newcommand{\floor}[1]{\lfloor #1 \rfloor }
\begin{document}

\everymath{\displaystyle}

\title[Levin's construction of absolutely normal numbers]{M. Levin's construction of absolutely normal numbers  with very low discrepancy}

\author{Nicol\'as Alvarez}
\address{Departamento de Ciencias e Ingenier\'ia de la Computaci\'on, Universidad Nacional del Sur, Argentina}
\email{naa@cs.uns.edu.ar}
\thanks{Supported by a doctoral fellowship from CONICET, Argentina.}

\author{Ver{\'o}nica\  Becher}
\address{Departmento de  Computaci\'on, Facultad de Ciencias Exactas y Naturales, Universidad de Buenos Aires \& CONICET, Argentina}
\email{vbecher@dc.uba.ar}
\thanks{Supported by Agencia Nacional de Promoci\'on Cient\'ifica y Tecnol\'ogica and CONICET, Argentina.}

\subjclass[2000]{Primary 11K16, 11K38, 68-04; Secondary 11-04}

\keywords{Normal numbers, Discrepancy, Algorithms}

\date{\today}

\maketitle

\begin{abstract}
Among the currently known constructions of absolutely normal numbers,  
the one given by Mordechay~Levin  in 1979 achieves the lowest discrepancy bound.  
In this work we analyze this construction in terms of computability and computational complexity. 
We show that, under basic assumptions, it yields a   computable real number.
The construction  does not give   the digits of  the  fractional expansion explicitly, 
but it gives  a  sequence of increasing approximations whose limit is the announced absolutely normal number.
The $n$-th approximation has an error  less than $2^{2^{-n}}$.
To obtain the $n$-th approximation  the construction requires, in the worst case,  
a number of mathematical operations that  is double  exponential in~$n$. 
We consider  variants on the construction that reduce the computational complexity 
at the expense of an increment in discrepancy.
\end{abstract}

\section{Introduction}
Normal numbers were introduced by Borel in~1909~\cite{borel1909}.
A real number $\alpha$  is normal to an integer base $\lambda$ greater than or equal to~$2$
 if  its fractional expansion in base $\lambda$ given by 
\[
\alpha -\lfloor \alpha \rfloor=\sum_{x\geq 1} \frac{d_x}{ \lambda^{x}}\qquad \text{ where  each   $d_x$ is in $\{0,1,...,\lambda -1\}$,}
\]
is such that, for each positive integer $k$,  each fixed  block of digits of length $k$  appears in $(d_x)_{x\geq 1}$ with asymptotic frequency~$\lambda^{-k}$.
Borel calls a number  absolutely normal if it  is normal to every integer base greater than or equal to~$2$.

Let $(\xi_{x})_{x\geq 0}$ be an arbitrary sequence of real numbers in the unit interval. 
The quantity
\[
D(P,(\xi_{x})_{x\geq 0})= \sup_{\gamma \in (0,1]} \left| \frac{\#\{ x : 0\leq x<P  \text{ and } \xi_x< \gamma\}}{P} - \gamma \ \right| 
\]
is  the discrepancy of $(\xi_x)_{x=0}^{P-1}$.
The sequence $(\xi_{x})_{x\geq 0}$ is  uniformly distributed  in  the unit interval 
 if $D(P, (\xi_{x})_{x\geq 0})$ goes to $0$ when $P$ goes to infinity.
By a theorem of D.~Wall 
\cite[Theorem 4.14]{bugeaud2012},  a real number $\alpha$ is normal to base 
$\lambda$  if, and only if, the sequence $\{\alpha \lambda^x\}_{x\geq 0}$, 
where $\{\xi\} = \xi- [\xi] $ is the fractional part of $\xi$,
 is uniformly distributed  in the unit interval. 

We  use the customary notation for asymptotic growth of functions,
and we say $f(n)$ is in $O(g(n))$  if  $\exists k>0 \ \exists n_0 \ \forall n>n_0 $, $ |f(n)| \leq k |g(n)|$.

Borel \cite{borel1909} proved that 
almost every real number (in the sense of Lebesgue measure) is normal to every integer base
and  Gal and Gal \cite{galgal} showed that, indeed, 
for almost every real number $\alpha$  and for every integer base $\lambda$
 the discrepancy  $D(P, \{\alpha \lambda^x\}_{x\geq 0}) $  is in $O\Big(\sqrt{\frac{\log\log P}{P}}\Big)$. 
For a thorough presentation of normal numbers and the theory of 
uniform distribution see the books~\cite{bugeaud2012,kuipers,drmotatichy1997}.

In 1979 Mordechay Levin   \cite{levin} 
considered  the notion of normality for real numbers  with respect to  bases that are real numbers greater than~$1$,
and  he gave   an explicit construction of  a number that is normal   to arbitrary many real bases, with controlled discrepancy of normality. 
More precisely, given a sequence   $(\lambda_j)_{j\geq 1}$ of real numbers greater than~$1$, 
 a monotone increasing  sequence 
 $(t_j)_{j\geq 1}$  of positive integers and a non negative real number $a$,
 Levin constructs a real number $\alpha$ greater than $a$ that is normal to each of the bases $\lambda_j$, for $j=1, 2, \ldots$
such that  $D(P, \{\alpha \lambda^x\}_{x\geq 0}) $ is in 
$O\Big(\frac{(\log P)^2}{\sqrt{P}}\omega(P)\Big)$,
where $\omega(P)$ is a non-decreasing function  
that determines from $(\lambda_j)_{j\geq 1}$ and $(t_j)_{j\geq 1}$ 
 the collection of bases  considered at position~$P$,
and the constant in the order symbol depends on $\lambda_j$.
Since $(\lambda_j)_{j\geq 1}$ and $(t_j)_{j\geq}$ can be such that   $\omega(P)$  grows arbitrarily slow, 
so $D(P, \{\alpha \lambda^x\}_{x\geq 0}) $  can end up being in $O\Big(\frac{(\log P)^2}{\sqrt{P}}\Big)$.
By considering normality  with respect to arbitrary  sequences $(\lambda_j)_{j\geq 1}$ of real numbers greater than $1$, 
Levin extends Borel's  notion of absolute normality.
With~$\lambda_j=j+1$ for  $j=1, 2,\ldots$,  he obtains a number $\alpha$ that is 
absolutely normal in Borel's~sense.

The interest  in  this construction by Levin is that, among the currently known 
methods to  construct absolutely normal numbers, 
 it  achieves the lowest discrepancy bound.  
In this work we give a plain presentation of  this construction and 
we show that, under basic assumptions,  the construction is computable and we establish its computational complexity.
 
Regarding  discrepancy and  computational complexity, known constructions of 
computable absolutely normal numbers can be classified as follows:


$\bullet$ \ Constructions that run in double exponential time, which means that 
to produce the $P$-digit of the expansion of the constructed number $\alpha$ in a given base 
they perform a number of operations that is double exponential in $P$. 
One example is Alan Turing's algorithm  \cite{turing,BecherFigueiraPicchi2007} 
for which $D(P, \{\alpha \lambda^x\}_{x\geq 0}) $ is  in $O\Big(\frac{1}{\sqrt[16]{P}}\Big)$.
Another is  the computable reformulation of Sierpi\'nski's construction~\cite{BecherFigueira2002}
for which $D(P, \{\alpha \lambda^x\}_{x\geq 0}) $ is in ${O\Big(\frac{1}{\sqrt[9]{P}}\Big)}$.

$\bullet$ \ Constructions that run in exponential time, 
as  Wolfgang Schmidt's  algorithm~\cite{schmidt} 
for which $D(P, \{\alpha \lambda^x\}_{x\geq 0}) $ is in
$O\Big(\frac{(\log P)^4 }{e^{({\log P)}^{1/4}}}  \Big)$.
Our variants of Schmidt's  algorithm \cite{BecherBugeaudSlaman2013,BecherSlaman2014} 
also require exponential time. 
These algorithms produce numbers that are normal to all the bases in a given arbitrary  set, 
while they are not (simply) normal to any of the multiplicatively independent 
bases in the complement.
Besides, our algorithm  \cite{liouville}  for computing an absolutely normal Liouville number $\alpha$
has at least exponential complexity and we have not estimated the 
discrepancy of the sequence $\{\lambda^j \alpha\}_{j= 0}^{P-1}$, for positive~$P$.

$\bullet$ \ Constructions that run in polynomial time, 
as our algorithm \cite{poly} that requires just above quadratic time
to compute an absolutely normal number~$\alpha$. 
Speed of computation is obtained by sacrificing discrepancy. 
The algorithm deals explicitly  with  the discrepancy at the intermediate steps of the construction
but we have  not estimated the discrepancy of the sequence~$\{\lambda^x \alpha\}_{x\geq  0}$.

There are constructions of numbers ensuring normality to just one base which achieve
much lower discrepancy bounds than those  for absolute normality.
The one with smallest discrepancy was given also by  Levin \cite{levin99}.
Using van der Corput type sequences.
Levin constructs a number  $\alpha$ normal to an integer base  $\lambda$, 
such that the discrepancy  $D(P, \{\alpha \lambda^x\}_{x\geq 0}) $ is in  $O\Big(\frac{(\log P)^2}{P} \Big)$.  
This discrepancy bound is surprisingly small, considering that 
for any sequence $(\xi_x)_{x\geq 0}$ of reals in the unit interval,
\[
\limsup_{P\to\infty}  \frac{P}{\log P}  D(P,(\xi_{x})_{x\geq 0}) 
\]
is greater than $0$ (this result was proved by W. Schmidt in 1972, see \cite{bugeaud2012}).  
The computational complexity of this construction  has not been studied yet.
Recently, Madritsch and Tichy \cite{madritschtichy2015} found conditions for van der Corput sets and 
 suggest to use them for the construction of absolutely normal~numbers.

The construction insuring normality to one base
that has smallest   computational complexity  
coincides with the historically first construction of a number that is normal to base $10$, 
and it is due to  Champernowne in 1933 \cite{champernowne}. 
Champernowne's constant is computable with  logarithmic complexity,
which means that the $P$-th digit in the  expansion  can be obtained  
independently of all the previous digits by performing 
$O(\log P)$ elementary operations. It is also possible  to compute the first 
$P$ digits of Champerowne's constant in $O(P)$ operations.
The  discrepancy $D(P, \{Champernowne's\ constant\ \lambda^x\}_{x\geq 0}) $ is in
 $O\Big(\frac{1}{\log P}\Big)$ and for every $P$ it  has been proved to be greater than 
or equal to  $\frac{K}{\log P}$, for positive $K$ \cite{champernowne,levin99, schiffer}.

\section{Levin's construction}

In this section we  give a comprehensible presentation of  Levin's  construction~\cite{levin}. 
We reorganized the original material but we kept the notation.

\begin{definition*}
Let $\lambda$ be a real number greater than~$1$ and let 
 $(\lambda_j)_{j=1}^\infty$ a sequence of real numbers, each  greater than~$1$.
A number $\alpha$ is normal to base $\lambda$
if the sequence $\{\alpha\lambda^x\}_{x\geq 0}$ is uniformly distributed in the unit interval, 
and absolutely normal  to bases  $\lambda_j$ for all positive $j$, if,  
 $\alpha$ is normal to base~$\lambda_j$ for each positive $j$.  
\end{definition*}

\begin{theorem}[Levin \cite{levin}]\label{thm:levin}
Let $(\lambda_j)_{j\geq 1}$ be sequence of real numbers  greater than~$1$,  
  let $(t_j)_{j\geq 1}$ be a sequence of integers monotonically increasing at any speed
and let $a$ be a non-negative real number.
There is a real number $\alpha$  constructed  from $a$ and the sequences $(\lambda_j)_{j\geq 1}$ and $(t_j)_{j\geq 1}$  which 
 is normal  to  base~$\lambda_j$ for each positive integer~$j$ 
and  such that for any positive integer~$P$,
\[
    D(P, \{\alpha \lambda_j^x\}_{x\geq 0})  \text{ is in }  O\left(\frac{(\log P)^2 \omega(P)}{\sqrt{P}}\right)
\]
where
$\omega(P)\! = 1 \text{  if } P \in [1,\ell_2), \text{and }
  \omega(P)\!= k \text{  if  } P \in [\ell_k, \ell_{k+1})$,
\text{with } 
\\$\ell_k\!=\max(t_k,\max_{1 \leq v\leq k} 2\lceil |\log_2\log_2 \lambda_v|\rceil + 5)$
and the constant in the order symbol depends on~$\lambda_j$.
\end{theorem}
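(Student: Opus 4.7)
The plan is to build $\alpha$ as the limit of a nested sequence of closed intervals $I_0 \supset I_1 \supset I_2 \supset \cdots$ of controlled shrinking length, started from an interval containing $a$ so that $\alpha > a$ is ensured. At stage $k$, the interval $I_k$ will be chosen so that every $\alpha \in I_k$ already meets the required exponential-sum bounds for the bases $\lambda_1, \ldots, \lambda_k$ on every prefix length $P$ in the window $[\ell_k, \ell_{k+1})$. The first technical step is to reduce discrepancy to exponential sums by the Erd\H{o}s--Tur\'an inequality, which bounds $D(P, \{\alpha \lambda_j^x\}_{x\geq 0})$ by a weighted sum of $\bigl|\sum_{x=0}^{P-1} e^{2\pi i h \alpha \lambda_j^x}\bigr|$ over $1 \leq h \leq H$ for a polylogarithmic cutoff $H$; it then suffices to control these sums uniformly in $(h,j)$ and in $P$.

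Second, at stage $k$ I would subdivide $I_{k-1}$ into many equal subintervals and bound, by a mean-square argument, the $L^2$-average over their centres of $\bigl|\sum_{x=0}^{P-1} e^{2\pi i h \alpha \lambda_j^x}\bigr|^2$. Expanding the square and swapping the order of summation converts this integral into a sum over pairs $(x,y)$ of one-dimensional geometric sums in $e^{2\pi i h (\lambda_j^x - \lambda_j^y) \alpha}$; rapid growth of $\lambda_j^x$ makes the off-diagonal contribution small, leaving a diagonal of size $P$. Taking square roots, summing over $h$ by Cauchy--Schwarz, and summing over the $k$ active bases $j$ produces the target $O((\log P)^2 \omega(P)/\sqrt{P})$ estimate. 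Pigeonhole then selects a subinterval of $I_{k-1}$ on which this bound holds for every $P \in [\ell_k, \ell_{k+1})$ simultaneously, and this subinterval becomes $I_k$.

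The main obstacle is the calibration of the schedule: the length of $I_k$, the cutoff $H$, and the threshold $\ell_k = \max(t_k, \max_{v\leq k} 2\lceil|\log_2\log_2\lambda_v|\rceil+5)$ above which base $\lambda_v$ is brought into the analysis, all have to interlock so that (i) the bounds earned at earlier stages survive once $I_k$ is shrunk further, (ii) base $\lambda_v$ enters the exponential-sum analysis only once $P$ is large enough that $\lambda_v^P$ is well separated from integer frequencies, which is precisely the role of $2\lceil|\log_2\log_2\lambda_v|\rceil+5$, and (iii) the cumulative shrinkage leaves $\bigcap_k I_k$ nonempty. Once this bookkeeping is consistent, the unique $\alpha \in \bigcap_k I_k$ satisfies the announced bound, with $\omega(P)$ encoding the number of bases active in the stage to which $P$ belongs, and the implicit constant absorbing the growth of $\lambda_j$.
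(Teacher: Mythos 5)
Your high-level architecture (nested intervals refined by subdivision, a reduction of discrepancy to exponential sums, a mean-square average over the candidate subintervals, and a pigeonhole) does match the skeleton of Levin's proof, but the central analytic step fails as you have set it up. The mean-square argument cannot be applied to the full prefix sum $\sum_{x=0}^{P-1} e^{2\pi i h \alpha \lambda_j^x}$ averaged over subintervals of $I_{k-1}$: for every pair $x,y$ small enough that $h\,|\lambda_j^x-\lambda_j^y|\cdot|I_{k-1}|\ll 1$, the phase is essentially constant across all of $I_{k-1}$, so these off-diagonal terms each contribute about $1$, i.e.\ about (length of the already-determined prefix) squared in total, which swamps the diagonal contribution $P$. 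Only the \emph{increment} of the sum over those $x$ whose phases are genuinely undetermined at the current stage can be averaged; this forces the block decomposition $x\in[n_{r,j},n_{r+1,j})$ with $n_r=2^r-2$ that is the heart of Levin's argument (Lemma~\ref{lemma:levin1} and Lemma~\ref{lemma2}), and the geometric growth of the block lengths is then exactly what makes $\sum_{r\le k}\sqrt{\tau_{r,j}}=O(\sqrt P)$; with linearly growing blocks the accumulated error is $\Theta(P)$ and the discrepancy bound does not tend to $0$ (Proposition~\ref{prop:nr}). Your stages indexed by the windows $[\ell_k,\ell_{k+1})$ in $P$ conflate this refinement schedule with the activation schedule of the bases: $\ell_k$ only governs when $\lambda_k$ enters the pigeonhole (that is, the function $\omega$), not where the intervals are refined.

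Two further concrete problems. First, a polylogarithmic cutoff $H$ in the Erd\H{o}s--Tur\'an inequality caps the resulting discrepancy bound below by a constant times $1/H$, so it can never yield $O\big((\log P)^2\omega(P)/\sqrt P\big)$; Levin's cutoff is $A_{r,j}=\lfloor\sqrt{\tau_{r,j}}\rfloor$, of polynomial size in the block length. Second, your pigeonhole must hold ``for every $P\in[\ell_k,\ell_{k+1})$ simultaneously,'' which is $(\ell_{k+1}-\ell_k)\cdot H\cdot k$ separate constraints with $\ell_{k+1}-\ell_k$ unbounded (the $t_k$ may grow at any speed), and the union of the corresponding exceptional sets then exceeds the whole of $I_{k-1}$. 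Levin avoids both difficulties at once: he first aggregates all frequencies into the single quantity $D_{r,j}(c)$ via Cauchy--Schwarz, so that the pigeonhole at stage $r$ has only $\omega(r)$ constraints, each with exceptional fraction below $1/\omega(r)$; and he makes the final bound uniform in $P$ not by adding one constraint per partial sum $\sum_{x=0}^{R-1}$ but by using the two-dimensional Koksma inequality with the auxiliary frequency $m_2x/\tau_{r,j}$ built into $S_{r,j}$, at the cost of only a $(3+\ln\tau_{r,j})^2$ factor. These are the ideas missing from your sketch, and without them the calibration you defer to the end cannot be made consistent.
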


\SetKwFor{Repeat}{repeat}{}{end}
\begin{algorithm}

\BlankLine
\SetAlgorithmName{Construction}{}
\BlankLine

\TitleOfAlgo{\bf Levin's construction of absolutely normal numbers}

\BlankLine
\BlankLine
  
 \SetKwInOut{Input}{input}\SetKwInOut{Output}{output}
\DontPrintSemicolon
\Input{a sequence $(\lambda_j)_{j\geq 1}$ of reals greater than $1$,\;  
\hspace*{1.3cm}   an increasing sequence $(t_j)_{j\geq 1}$ of integers\; 
\hspace*{1.3cm} a  non-negative real  $a$.}

\BlankLine

\Output{a sequence of rationals $(\alpha_r)_{r\geq 1}$ such  that 
$\lim_{r\to \infty} \alpha_r = \alpha$ and \;
for each $\lambda_j$, the discrepancy of $\{\alpha \lambda_j^x\}_{x=0}^{P}$ is
 in $O\Big(\frac{(\log P)^2 \omega(P)}{\sqrt{P} }  \Big)$.}

\BlankLine
\BlankLine

Define the function  $\displaystyle \ell_k = \max(t_k, \max_{1 \leq v \leq k} 2 \lceil |\log_2\log_2 \lambda_v| \rceil + 5)$\;

 $r = \ell_1$ \;

 $\alpha_r= a$  \;

\BlankLine

\Repeat{forever}{
\Indp
	$n_r= 2^r-2 $\;
	$q_r =  2^{2^r+r+1}$\;
        if  $r $ in $[1,\ell_2)) $  then  $\omega(r) = 1$ \; 
        else $\omega(r)$ = the  unique  $k $ such that  $r $ in $ [\ell_k, \ell_{k+1})$ \;

	\For{$j=1 $ \KwTo $\omega(r)$}{
		$\tau_{r,j}=n_{r+1,j}-n_{r,j}$\;
		$A_{r,j}=\lfloor\sqrt{ \tau_{r,j}}\rfloor$\;
	}
	\mbox{Find the least integer $a_r$ in $[0, q_r)$ such that for each $j$ in $[1,\omega(r)]$}\;
	\centerline{$\displaystyle{D_{r,j}(a_r)< 2\Big( \frac{\lambda_j}{\lambda_j-1}\Big)^{3/2} \sqrt{\tau_{r,j}}\big(3+\ln \tau_{r,j}\big)^2} $}\
	where \;
	\qquad $D_{r,j}(c)=\sideset{}{'}\sum_{m_1,m_2=-A_{r,j}}^{A_{r,j}} \frac{|S_{r,j}(m_1,m_2,c)|}{\overline{m_1}\ \overline{m_2}}$,\;
       	\qquad $S_{r,j}(m_1,m_2,c)=\sum_{x=0}^{{\tau_{r,j}}-1} 
	 e\left(2 \pi i \Big(m_1\Big(\alpha_r+\frac{c}{2^{n_r} q_r }\Big) \lambda_{j}^{n_{r,j}+x} + \frac{m_2x}{\tau_{r,j}}\Big)\right)$,\;
       \qquad $\sideset{}{'}\sum$ denotes the sum without  the term with $m_1=m_2=0$, \;
       \qquad $\overline{m}=\max(1,|m|)$. \;
       
\BlankLine
\BlankLine
 
$\displaystyle{\alpha_{r+1}=\alpha_r + \frac{a_r}{2^{n_{r}} {q_r}}}$\;

print $\alpha_{r+1}$\;

$r= r+1$\;
\BlankLine
\BlankLine

}
\end{algorithm}

The number $\alpha$ proposed by Levin is defined as
\[
\alpha= a + \sum_{r=\ell_1}^{\infty}\frac{a_r}{2^{n_r} q_r},
\]
\begin{itemize}
\item[] $n_r=2^r-2$, and 
\item[] $q_r= 2^{2^r+r+1}$.
\end{itemize}
Fix $(\lambda_j)_{j\geq 1}$ an arbitrary  sequence of real numbers greater than~$1$,
fix $(t_j)_{j\geq 1}$  a sequence of integers monotonically increasing at any speed and fix a non-negative real $a$.
Along the article we refer freely to the values $\ell_r$,  $n_r, q_r$, $a_r$ and $\omega(r)$ for any positive $r$
as well as  to the real  $\alpha$.

We need some further notation.
For each pair of positive integers  $r,j$ we let
\begin{itemize}
\item[]  $n_{r,j}=\floor{n_r \log_{\lambda_j }2}$,
\item[] $\tau_{r,j}=n_{r+1,j}- n_{r,j}$, and
\item[] $A_{r,j}=\floor{\sqrt{\tau_{r,j}}}$.
\end{itemize}

\begin{lemma}\label{lemma:easy}
For every positive $j$ and for every $r\geq \ell_j-1$,
\begin{itemize}
\item[] $2^{r-1}\log_{\lambda_j}2 \leq \tau_{r,j}\leq 2^{r+1}\log_{\lambda_j} 2$,
\item[] $ \tau_{r,j} \geq \max(7, \tau_{r+1,j}/4)   $.
\end{itemize}
\end{lemma}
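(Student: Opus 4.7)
The plan is to reduce both parts of the lemma to a single clean estimate. Since $n_r = 2^r - 2$ we have $n_{r+1} - n_r = 2^r$, so from the definition $n_{r,j} = \lfloor n_r \log_{\lambda_j} 2 \rfloor$ the difference
\[
\tau_{r,j} \;=\; \lfloor n_{r+1}\log_{\lambda_j} 2\rfloor - \lfloor n_r \log_{\lambda_j} 2\rfloor
\]
differs from $2^r \log_{\lambda_j} 2$ by strictly less than $1$. Both inequalities of (i) are then equivalent to the single condition $2^{r-1} \log_{\lambda_j} 2 \geq 1$, while the inequalities of (ii) will follow from a quantitative strengthening of this same condition.

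The key step I would carry out is to show that, under the hypothesis $r \geq \ell_j - 1$, one in fact has $2^{r-1} \log_{\lambda_j} 2 \geq 8$. This requires a short case analysis unpacking the definition of $\ell_j$. If $\lambda_j \leq 2$, then $\log_{\lambda_j} 2 \geq 1$, and since $\ell_j \geq 5$ always we get $r-1\geq 3$, so the product is at least $8$. If $\lambda_j > 2$, then $|\log_2\log_2 \lambda_j|$ coincides with $\log_2\log_2 \lambda_j$ and the definition of $\ell_j$ yields $2^{r-1} \geq 2^{\ell_j - 2} \geq 8(\log_2 \lambda_j)^2$; combining with $\log_{\lambda_j} 2 = 1/\log_2 \lambda_j$ gives a product at least $8\log_2 \lambda_j > 8$. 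The subcase $1 < \lambda_j < 2$ is absorbed into the first one, and the absolute value inside the definition of $\ell_j$ only makes $r$ (and the product) larger, not smaller.

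From the inequality $2^{r-1}\log_{\lambda_j} 2 \geq 8$ both parts drop out quickly. Part (i) follows from the two elementary inequalities $2^r \log_{\lambda_j} 2 - 1 \geq 2^{r-1}\log_{\lambda_j} 2$ and $2^r \log_{\lambda_j} 2 + 1 \leq 2^{r+1}\log_{\lambda_j} 2$, both of which reduce to $2^{r-1}\log_{\lambda_j} 2 \geq 1$. For part (ii), the bound $\tau_{r,j} \geq 7$ is immediate since $\tau_{r,j} \geq 2^r\log_{\lambda_j} 2 - 1 \geq 15$. Applying the identity at index $r+1$ (permissible because $r+1 \geq \ell_j - 1$) gives $\tau_{r+1,j} \leq 2^{r+1}\log_{\lambda_j} 2 + 1$, whence
\[
4\tau_{r,j} - \tau_{r+1,j} \;\geq\; 4\bigl(2^r\log_{\lambda_j} 2 - 1\bigr) - \bigl(2^{r+1}\log_{\lambda_j} 2 + 1\bigr) \;=\; 2^{r+1}\log_{\lambda_j} 2 - 5 \;>\; 0.
\]
No step here is genuinely difficult; the only thing that needs care is the case analysis on the size of $\lambda_j$, where the absolute value in the definition of $\ell_j$ must be correctly handled to guarantee $\ell_j$ grows fast enough to dominate $\log_2 \lambda_j$.
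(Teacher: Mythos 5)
Your proof is correct and follows essentially the same route as the paper's: write $\tau_{r,j}=2^{r}\log_{\lambda_j}2+\theta_{r,j}$ with $|\theta_{r,j}|<1$ and then show the main term is at least $8$ using the definition of $\ell_j$, from which all four inequalities follow. The only difference is cosmetic: where you run a case analysis on $\lambda_j\lessgtr 2$, the paper compresses the same bound into the single identity $8=2^{\log_2\log_2\lambda_j+3}\log_{\lambda_j}2\leq 2^{r}\log_{\lambda_j}2$.
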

\begin{proof}
From the definitions we know that $\tau_{r,j}=2^{r}\log_{\lambda_j}2+\theta_{r,j}$, where $|\theta_{r,j}|\leq 1$,  while for $r\geq \ell_j-1$,
\[
8=  2^{\log_2\log_2 \lambda_j + 3}  \log_{\lambda_j} 2
\leq 2^r\log_{\lambda_j}2.
\]
The wanted inequalities follow.
\end{proof}

Fix $\alpha_{\mbox{$\ell_1$}}=a$ and for each positive integer $m$, let $ a_{m}$ in $[0,q_m)$.
For every $r\geq \ell_1$ define
\[
\alpha_{r+1}=\alpha_{\ell_1}+\sum_{m={\ell_1}}^r    \frac{a_m}{2^{n_{m}} {q_m}}.
\]
We write $e(x)$ to denote $e^x$.
For integers $c,m_1,m_2,r$ with $r\geq \ell_j$ we define the quantities,
\[
S_{r,j}(m_1,m_2,c)=\sum_{x=0}^{{\tau_{r,j}}-1} 
 e\left(2 \pi i \Big(m_1\Big(\alpha_r+\frac{c}{2^{n_r} q_r }\Big) \lambda_{j}^{n_{r,j}+x} + \frac{m_2x}{\tau_{r,j}}\Big)\right),
\]
\[
D_{r,j}(c)=\sideset{}{'}\sum_{m_1,m_2=-A_{r,j}}^{A_{r,j}} \frac{|S_{r,j}(m_1,m_2,c)|}{\overline{m_1}\ \overline{m_2}},
\]
where $\overline{m}=\max(1,|m|)$ and $\sideset{}{'}\sum$ denotes that the term with $m_1=m_2=0$ is absent from the sum.

\begin{remark}
In Levin's paper \cite{levin} the definition of $S_{r,j}(m_1,m_2,c)$ 
appears with  $\sideset{}{'}\sum$ while the definition of $D_{r,j}(c)$
appears with  $\sum$.
However, the use of $\sideset{}{'}\sum$  excludes the term $m_1=m2=0$,
which only makes sense in the definition of  $D_{r,j}(c)$.
\end{remark}

\begin{lemma}[Lemma 1 in \cite{levin}]\label{lemma:levin1}
Let integers $j, r,m_1,m_2$ such that $r\geq \ell_j$ and $0< \max(|m_1|, |m_2|)\leq A_{r,j}$.
Then, 
\[
\Big(\frac{ 1}{q_r} \sum_{c=0}^{{q_r}-1} | S_{r,j}(m_1,m_2,c)|^2 \Big)^{1/2} < 
2 \Big(\frac{\lambda_j}{\lambda_j-1}\Big)^{3/2}\sqrt{\tau_{r,j}}.
\]
\end{lemma}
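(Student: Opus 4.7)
The plan is to expand $|S_{r,j}(m_1,m_2,c)|^2 = S\bar S$ as a double sum over $x,y \in \{0,\ldots,\tau_{r,j}-1\}$ and to exploit cancellation in the average over~$c$. The only $c$-dependence lives in the factor $e(2\pi i c\beta_{x,y})$, where $\beta_{x,y} = m_1(\lambda_j^{n_{r,j}+x}-\lambda_j^{n_{r,j}+y})/(2^{n_r}q_r)$, so after swapping sums the inner average becomes $\frac{1}{q_r}\sum_{c=0}^{q_r-1}e(2\pi ic\beta_{x,y})$. This equals~$1$ when $x=y$ (contributing a diagonal total of $\tau_{r,j}$), and otherwise is a geometric sum which, provided $|\beta_{x,y}|<1/2$, the sine inequality $|\sin(\pi t)|\geq 2|t|$ bounds by $1/(2q_r|\beta_{x,y}|)$.

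The hard part is certifying $|\beta_{x,y}|<1/2$ uniformly in the admissible $x,y,m_1$ — this step is what pins down the final constant and uses the hypothesis $r\geq\ell_j$ in an essential way. Using $\lambda_j^{n_{r+1,j}}\leq 2^{n_{r+1}}$ (since $n_{r+1,j}\leq n_{r+1}\log_{\lambda_j}2$), the telescoping identity $2^{n_{r+1}-n_r}/q_r = 2^{2^r}/2^{2^r+r+1} = 2^{-r-1}$, and $|m_1|\leq A_{r,j}\leq\sqrt{2^{r+1}\log_{\lambda_j}2}$ from Lemma~\ref{lemma:easy}, I obtain $|\beta_{x,y}|\leq\sqrt{\log_{\lambda_j}2}\cdot 2^{-(r+1)/2}$, which is $<1/2$ once $r\geq\ell_j$ precisely because of the summand $2\lceil|\log_2\log_2\lambda_v|\rceil+5$ in the definition of $\ell_j$.

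With that in hand, the off-diagonal contribution is summed as a product of three geometric series. Parametrizing by $z=\min(x,y)$ and $k=|x-y|\geq 1$ gives $|\beta_{x,y}|=|m_1|\lambda_j^{n_{r,j}+z}(\lambda_j^k-1)/(2^{n_r}q_r)$, and $\sum_{x\neq y}1/(2q_r|\beta_{x,y}|)$ factors through $\sum_{z\geq 0}\lambda_j^{-z}\leq\lambda_j/(\lambda_j-1)$, the estimate $\sum_{k\geq 1}1/(\lambda_j^k-1)\leq\lambda_j/(\lambda_j-1)^2$ (using $\lambda_j^k-1\geq\lambda_j^{k-1}(\lambda_j-1)$), and the prefactor $2^{n_r}/\lambda_j^{n_{r,j}}\leq\lambda_j$ (from $n_{r,j}>n_r\log_{\lambda_j}2-1$), yielding an off-diagonal bound of $\lambda_j^3/((\lambda_j-1)^3|m_1|)$ when $m_1\neq 0$. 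Adding the diagonal gives $\frac{1}{q_r}\sum_c|S_{r,j}|^2\leq\tau_{r,j}+(\lambda_j/(\lambda_j-1))^3 < 4(\lambda_j/(\lambda_j-1))^3\tau_{r,j}$, the strict inequality following from $\tau_{r,j}\geq 7$ (Lemma~\ref{lemma:easy}). The degenerate case $m_1=0$ forces $m_2\neq 0$, whereupon $S_{r,j}(0,m_2,c)=\sum_{x=0}^{\tau_{r,j}-1}e(2\pi im_2 x/\tau_{r,j})$ vanishes because $0<|m_2|\leq A_{r,j}<\tau_{r,j}$, making the bound trivially true. Taking square roots finishes the proof.
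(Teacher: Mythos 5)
Your proposal is correct and follows essentially the same route as the paper's proof: expand $|S_{r,j}|^2$ into a double sum over $x,y$, average over $c$, kill the off-diagonal terms by verifying that the phase $\beta_{x,y}$ has absolute value below $1/2$ (so the distance to the nearest integer is just $|\beta_{x,y}|$), and then sum geometric series to get the $(\lambda_j/(\lambda_j-1))^3$ factor, with the $m_1=0$ case handled by the vanishing of a full-period exponential sum. The minor differences (your telescoping identity $2^{n_{r+1}-n_r}/q_r=2^{-r-1}$ versus the paper's chain of three inequalities, and your $\sum_k 1/(\lambda_j^k-1)$ reindexing versus the paper's $1/(\lambda_j^y\lambda_j^{x-y-1}(1-\lambda_j^{y-x}))$) are cosmetic and lead to the same constants.
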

\begin{proof}
Let
\[
    T_{r,j}(m_1,m_2)= \Big(\frac{ 1}{q_r} \sum_{c=0}^{{q_r}-1} | S_{r,j}(m_1,m_2,c)|^2 \Big)^{1/2}.
\]
\begin{remark}
Levin's original paper misses the third parameter $c$ of the function $S_{r,j}$.
\end{remark}
In accordance with the familiar inequality
\[
\frac{1}{N} \left|\sum_{x=0}^{N-1}e(2\pi i \theta x)\right|
\leq \min\left(1, \frac{1}{2N\langle\langle \theta\rangle\rangle}\right),
\]
where $\langle\langle \theta\rangle\rangle$ is the distance of $\theta$ from the nearest integer, we have 
\begin{align*}
&\hspace*{-1cm}
T^2_{r,j}(m_1,m_2)=
\\=&\sum_{x,y=0}^{\tau_{r,j}-1} \frac{1}{q_r} 
\sum_{c=0}^{q_r-1}
e\Big(2\pi i 
\Big(m_1 
\Big(\alpha_r+\frac{c}{2^{n_r}q_r}\Big)
\big(\lambda_j^{n_{r,j}+x}-\lambda_j^{n_{r,j}+y} \Big)+  
            \frac{m_2(x-y)}{\tau_{r,j}} \Big)\Big)
\\
<& \sum_{x,y=0}^{\tau_{r,j}-1} 
    \min\left(1, \frac{1}{2 q_r\langle\langle m_1 \frac{\lambda_j^{n_{r,j}+x}-\lambda_j^{n_{r,j}+y}}{2^{n_r} q_r}\rangle\rangle}\right).
\end{align*}
If $m_1$ equals $0$ then $m_2$ does not belong to $0(\mod \tau_{r,j})$,
$\tau_{r,j}\geq 7$,  $0 < |m_2|\leq A_{r,j}<\tau_{r,j}$, and 
\[
T_{r,j}(0,m_2)=0.
\]
Let $|m_1|>0$.
Let us show that the expression under $\langle\langle\rangle\rangle$ sign above has absolute value less than $1/2$.
Since $r\geq \ell_j$, by Lemma \ref{lemma:easy},
\begin{align*}
&\lambda_{j}^{n_{r+1,j}} \leq \lambda_{j}^{n_{r+1} log_{\lambda_{j}} 2} = 2^{n_{r+1}} = 2^{n_r} 2^{2^r},
\\
&\log_{\lambda_j}2= 2^{-\log_2 \log_2 \lambda_j }< 2^{\ell_j-3}< 2^{r-3},
\\
&A_{r,j}=\floor{\sqrt\tau_{r,j}}<\sqrt{2^{r+1}\log_{\lambda_j}2}< 2^{r-1}.
\end{align*}
Hence, 
\[
|m_1 (\lambda_j^{n_{r,j}+x}-\lambda_j^{n_{r,j}+y})|
<
2 A_{r,j} \lambda_j^{n_{r+1,j}} < 2^r 2^{n_r} 2^{2^r}= (1/2) 2^{n_{r}}q_r,
\]
and we can replace $\langle\langle\rangle\rangle$  by the absolute value sign:
\[
T^2_{r,j}(m_1,m_2)\leq 
\tau_{r,j}+2\sum_{\tau_{r,j} > x > y\geq 0} \frac{2^{n_r}}{2|m_1| \lambda_j^{n_{r,j}}(\lambda_j^x-\lambda_j^y)}.
\]
Using the definition of $n_{r,j}$, 
\[
\lambda_{j}^{n_{r,j}+1}\geq \lambda_j^{n_r\log_{\lambda_j}2}= 2^{n_r},
\]
whence,
\begin{align*}
T^2_{r,j}(m_1,m_2)&\leq \tau_{r,j} + \sum_{\tau_{r,j} > x > y\geq 0}  \frac{1}{\lambda_j^y \lambda_j^{x-y-1}(1-\lambda_j^{y-x})}
\\&< \tau_{r,j} +\sum_{y,z=0}^{\infty} \frac{1}{\lambda_j^y \lambda_j^z (1-\lambda_j^{-1})}
\\&= \tau_{r,j}+\Big(\frac{\lambda_j}{\lambda_j-1} \Big)^3
\\&<4\tau_{r,j}\Big(\frac{\lambda_j}{\lambda_j-1} \Big)^3.
\end{align*}
\end{proof}

\begin{lemma}[Lemma 2 in \cite{levin}]\label{lemma2}
Let $r\geq \ell_1$. There exists an integer $a_r$ in $[0,q_r)$ such that, 
given any positive integer $j$ and with the condition $r\geq \ell_j$, we have
\[
D_{r,j}(a_r)< 
2\Big( \frac{\lambda_j}{\lambda_j-1}\Big)^{3/2} 
\sqrt{\tau_{r,j}}\big(3+\ln \tau_{r,j}\big)^2 
\omega(r).
\]
\end{lemma}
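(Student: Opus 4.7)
The plan is to use a first-moment averaging argument in the parameter $c \in [0, q_r)$, combined with a pigeonhole over the $\omega(r)$ values of $j$ for which the condition $r \geq \ell_j$ is active. The linchpin is Lemma~\ref{lemma:levin1}, which provides an $L^2$-in-$c$ control on the exponential sums $S_{r,j}(m_1,m_2,c)$.

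First, I would fix an admissible $j$ (meaning $r \geq \ell_j$), average $D_{r,j}(c)$ over $c \in [0, q_r)$, interchange the sum over $c$ with the outer primed sum over $(m_1,m_2)$, and bound $\frac{1}{q_r}\sum_c |S_{r,j}(m_1,m_2,c)|$ by $\bigl(\frac{1}{q_r}\sum_c |S_{r,j}(m_1,m_2,c)|^2\bigr)^{1/2}$ via Cauchy--Schwarz (equivalently, Jensen passing from $L^1$ to $L^2$). Lemma~\ref{lemma:levin1} then bounds every such term by $2(\lambda_j/(\lambda_j-1))^{3/2}\sqrt{\tau_{r,j}}$, uniformly in $(m_1,m_2)$.

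Next, I would estimate the weight sum $\sum'\frac{1}{\overline{m_1}\,\overline{m_2}}$ over $|m_i|\leq A_{r,j}$ using the standard harmonic bound $\sum_{m=1}^{A} 1/m \leq 1+\ln A$, yielding at most $(3+2\ln A_{r,j})^2 \leq (3+\ln \tau_{r,j})^2$, where the last step uses $A_{r,j}\leq\sqrt{\tau_{r,j}}$. Combining these two estimates, the average $\frac{1}{q_r}\sum_c D_{r,j}(c)$ is \emph{strictly} less than the per-base bound $M_{r,j}:=2\bigl(\lambda_j/(\lambda_j-1)\bigr)^{3/2}\sqrt{\tau_{r,j}}\,(3+\ln \tau_{r,j})^2$.

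Finally, I would upgrade this average bound to a uniform one via Markov plus pigeonhole. Since $(\ell_j)_{j\geq 1}$ is non-decreasing in $j$, the set of indices $j$ satisfying $r \geq \ell_j$ is exactly $\{1,2,\ldots,\omega(r)\}$. For each such $j$, the exceptional set $B_j=\{c\in [0,q_r) : D_{r,j}(c)\geq \omega(r)\,M_{r,j}\}$ has $|B_j|<q_r/\omega(r)$ by Markov, the strictness being inherited from the strict inequality in Lemma~\ref{lemma:levin1}. Hence $\bigl|\bigcup_{j=1}^{\omega(r)} B_j\bigr|<q_r$, and any $a_r\in [0,q_r)$ outside this union satisfies the desired bound simultaneously for every admissible $j$, so the least such $a_r$ works. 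The only real obstacle is the bookkeeping: one must check that the admissible range of $j$ coincides with $\{1,\ldots,\omega(r)\}$ by monotonicity of $(\ell_j)$, and one must preserve strict inequalities throughout so that the pigeonhole leaves a non-empty set of good candidates $a_r$.
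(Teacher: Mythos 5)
Your proposal is correct and follows essentially the same route as the paper's proof: averaging $D_{r,j}(c)$ over $c$, passing from $L^1$ to $L^2$ via Cauchy--Schwarz so that Lemma~\ref{lemma:levin1} applies, bounding the harmonic weight sum by $(3+\ln\tau_{r,j})^2$, and then combining Markov's inequality with a union bound over the admissible indices $j\in\{1,\ldots,\omega(r)\}$. No substantive differences.
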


\begin{proof}
Using the Cauchy-Bunyakovskii-Schwarz inequality  we obtain, 
\begin{align*}
\frac{1}{q_r}\sum_{c=0}^{q_r-1} D_{r,j}(c)
=& 
\sum_{m_1,m_2=-A_{r,j}}^{A_{r,j}}  \!\!\!\!\!\!\!\!^{\Large '} \ \ \ \ \  \frac{1}{\overline{m_1}\overline{m_2} q_r}\sum_{c=0}^{q_r-1} |S_{r,j}(m_1,m_2, c)|
\\
\leq& 
\sum_{m_1,m_2=-A_{r,j}}^{A_{r,j}}  \!\!\!\!\!\!\!\!^{\Large '} \ \ \ \ \  \frac{1}{\overline{m_1}\overline{m_2}}
\Big(\frac{ 1}{q_r} \sum_{c=0}^{{q_r}-1} | S_{r,j}(m_1,m_2)|^2 \Big)^{1/2}.
\end{align*}
Since the conditions of Lemma~\ref{lemma:levin1} are satisfied, we have
\begin{align*}
\frac{1}{q_r}\sum_{c=0}^{q_r-1} D_{r,j}(c) 
&< 2\Big(\frac{\lambda_j}{\lambda_j-1}\Big)^{3/2}\sqrt{\tau_{r,j}}(3+2\ln A_{r,j})^2
\\
&\leq 2 \Big(\frac{\lambda_j}{\lambda_j-1}\Big)^{3/2}\sqrt{\tau_{r,j}}(3+\ln \tau_{r,j})^2.
\end{align*}
Consequently, with $r\geq \ell_j$, the number of integers $c$ in $[0, q_r)$ such that 
\[
D_{r,j}(c)\geq  2 \omega(r) \Big(\frac{\lambda_j}{\lambda_j-1}\Big)^{3/2}\sqrt{\tau_{r,j}}(3+\ln \tau_{r,j})^2
\]
is less than $q_r/\omega(r)$.
By the definitions of $\omega(r)$ and $\ell_j$,
 conditions $r\geq \ell_j$  and $ \omega(r)\geq j$  are equivalent. 
In this case, the number of integers $c$ in $[0,q_r)$, such that the 
above inequality holds for at least one positive integer $j$, with the condition~$r\geq \ell_j$
(alternatively, $j\in [1,\omega(r)]$) is less than $\omega(r)\floor{q_r/\omega(r)}=q_r$.
Consequently, there exists an integer $c=a_r$ in $[0,q_r)$, such that the inequality in the statement 
of this lemma holds for all positive $j$ with the condition~$r\geq \ell_j$.
\end{proof}

For the proof of Theorem~\ref{thm:levin} Levin uses multidimensional discrepancy 
and applies  Koksma's inequality  \cite{koksma1950}.

Let $s$ be a positive integer, let $\gamma_v$, for $v=1, \ldots, s$, 
be real numbers in the unit interval, let $(\beta_{x,v})_{x\geq 0}$ for $v=1, \ldots, s$ 
be a set of real number sequences, and let 
$N_v(P)$ be the number of solutions for $x = 0, 1, \ldots , P-1$, of the system of inequalities
\begin{eqnarray*}
    \{\beta_{x,1}\} & < & \gamma_1 \\
    \{\beta_{x,2}\} & < & \gamma_2 \\
                  & \vdots  & \\
    \{\beta_{x,s}\} & < & \gamma_s. 
\end{eqnarray*}
The quantity 
\[
D(P,(\{\beta_{x,1}\}, \ldots ,\{\beta_{x,s}\})_{x\geq 0})=
\sup_{\gamma_1,\ldots,\gamma_s\in(0,1]^s} \left|\frac{N_v(P)}{P}-\gamma_1\cdot \cdot \gamma_s\right|
\]
 is called the discrepancy of the sequences $\{\beta_{x,1}\}, \ldots ,\{\beta_{x,s}\}$, for $x=0\ldots,P-1$.

\begin{lemma}[Koksma \cite{koksma1950}]\label{lemma:koksma}
Let $s$ be a positive integer, let $\gamma_v$, for $v=1, \ldots, s$, 
be real numbers in the unit interval, let $(\beta_{x,v})_{x\geq 0}$ for $v=1, \ldots, s$ be a set of real number sequences.
Let $P$ be a positive integer.  Then, for every  integer $n$,
\medskip\\
$
D(P,(\{\beta_{x,1}\}, \ldots ,\{\beta_{x,s}\})_{x\geq 0})
\leq 
{
30^s\left( \frac{1}{n}+\frac{1}{P} 
\sum_{m_1\ldots m_s=-n}^n 
\!\!\!\!\!\!\!\!^{\Large '} 
\frac{\Big|\sum_{x=0}^{P-1} e\Big(2\pi i \sum_{v=1}^s m_v \beta_{x,v}\Big)\Big|}{\overline{m_1}\ldots\overline{m_s}}\right).}
$
\end{lemma}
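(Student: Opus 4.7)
The plan is to follow the classical Erd\H{o}s--Tur\'an--Koksma strategy: sandwich the indicator function of the box $[0,\gamma_1) \times \cdots \times [0, \gamma_s) \pmod 1$ between trigonometric polynomials of degree at most $n$ in each coordinate, and then bound the resulting exponential sums. The key tool is a one-dimensional approximation lemma; once it is in hand, the proof reduces to a multiple Fourier expansion and the triangle inequality.

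First, I would fix $\gamma_1, \ldots, \gamma_s \in (0,1]$ and write the deviation as
\[
\frac{N_v(P)}{P} - \prod_{v=1}^s \gamma_v
=
\frac{1}{P}\sum_{x=0}^{P-1} \prod_{v=1}^s \chi_{[0,\gamma_v)}\big(\{\beta_{x,v}\}\big) - \prod_{v=1}^s \gamma_v,
\]
so that it suffices to bound this expression uniformly in $\gamma_1,\ldots,\gamma_s$.

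Second, I would invoke the standard one-dimensional construction (going back to Fej\'er and refined by Selberg and Vaaler) that produces, for each $v$, trigonometric polynomials $f_v^\pm$ of degree at most $n$ with
\[
f_v^-(y) \leq \chi_{[0,\gamma_v)}\big(\{y\}\big) \leq f_v^+(y),
\]
whose means satisfy $\int_0^1 f_v^\pm(y)\,dy = \gamma_v + O(1/n)$ and whose Fourier coefficients $c_m(f_v^\pm)$ obey $|c_m(f_v^\pm)| \leq C/\overline{m}$ for an explicit absolute constant $C$. Taking the tensor product $\prod_v f_v^\pm$ yields upper and lower trigonometric majorants for the indicator of the full $s$-dimensional box whose zeroth multi-Fourier coefficient agrees with $\prod_v \gamma_v$ up to an error of order $1/n$.

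Third, I would expand $\prod_v f_v^\pm(\beta_{x,v})$ as a multiple Fourier polynomial in $m_1,\ldots,m_s \in [-n,n]$, substitute into the displayed equation, average over $x=0,\ldots,P-1$, and apply the triangle inequality. The $(0,\ldots,0)$ frequency reproduces $\prod_v\gamma_v$ with an error of size $O(1/n)$, while the nonzero frequencies contribute, after bounding the product of one-dimensional coefficients by $C^s/(\overline{m_1}\cdots\overline{m_s})$,
\[
\frac{C^s}{P}\sum_{m_1,\ldots,m_s=-n}^{n} {}'
\frac{\Big|\sum_{x=0}^{P-1} e\big(2\pi i \sum_{v=1}^s m_v \beta_{x,v}\big)\Big|}{\overline{m_1}\cdots \overline{m_s}}.
\]
Combining the upper bound from $f_v^+$ with the lower bound from $f_v^-$ and taking the supremum over $\gamma_1,\ldots,\gamma_s$ gives the stated inequality, the factor $30^s$ being the accumulated contribution of the one-dimensional constants.

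The main obstacle is not the tensor-product Fourier expansion, which is essentially bookkeeping, but the construction of the one-dimensional sandwich polynomials $f_v^\pm$ together with the decay estimate $|c_m(f_v^\pm)| \leq C/\overline{m}$ at the precise constant that yields $30^s$. This is the delicate technical lemma in Fourier analysis that underlies every Erd\H{o}s--Tur\'an--Koksma type inequality, and where all the arithmetic of the proof is concentrated.
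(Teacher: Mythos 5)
The paper does not prove this lemma at all: it is quoted verbatim from Koksma's 1950 paper and used as a black box, so there is no in-paper argument to compare yours against. Judged on its own terms, your sketch follows the standard Erd\H{o}s--Tur\'an--Koksma route (one-dimensional trigonometric majorants and minorants, tensor product, Fourier expansion, triangle inequality), which is indeed how this inequality is proved in the literature, and you correctly locate the arithmetic heart of the matter in the construction of the polynomials $f_v^\pm$ with mean $\gamma_v+O(1/n)$ and coefficients $O(1/\overline{m})$. Since you leave that construction entirely to a citation, what you have is an accurate roadmap rather than a proof.

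One step you dismiss as ``bookkeeping'' actually hides a genuine subtlety: the tensor product of the minorants is \emph{not} automatically a minorant of the indicator of the box. From $f_v^-(y)\leq \chi_{[0,\gamma_v)}(\{y\})$ one cannot conclude $\prod_v f_v^-\leq \prod_v \chi_{[0,\gamma_v)}$, because the $f_v^-$ may take negative values and a product of negatives can exceed a product of nonnegatives. The standard fix is a telescoping identity such as
\[
\prod_{v=1}^s g_v-\prod_{v=1}^s h_v=\sum_{v=1}^s\Big(\prod_{w<v}g_w\Big)(g_v-h_v)\Big(\prod_{w>v}h_w\Big),
\]
applied with the nonnegative factors $\chi_{[0,\gamma_v)}$ and $f_v^+$ arranged so that every residual factor is nonnegative and bounded, which converts the two-sided sandwich into an estimate on $\bigl|\frac{1}{P}\sum_x\prod_v\chi_{[0,\gamma_v)}(\{\beta_{x,v}\})-\prod_v\gamma_v\bigr|$ term by term. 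Without this (or an equivalent induction on $s$), the lower-bound half of your argument does not go through as written. With that repair, and with the Selberg--Vaaler construction supplied, your outline becomes a complete proof of the cited inequality.
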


We can now present Levin's proof of Theorem~\ref{thm:levin} \cite{levin}.

\begin{remark}
In the next proof we write $n_{\ell_j,j}$ where Levin  wrote $n_{\ell_j}$.
\end{remark}

\begin{proof}[Proof of Theorem~\ref{thm:levin}]
For any three real numbers $\xi, \lambda,  \gamma$ and non-negative integers $Q$ and $P$, we denote by  
$N_{\xi, \lambda, \gamma}(Q,P)$ the number of solutions of the inequality
\[
\{\xi \lambda^x\}< \gamma, \quad \text{ for }x=Q, \ldots, Q+P-1.
\]
We write $N_{\xi, \lambda, \gamma}(P)$, to denote $N_{\xi, \lambda, \gamma}(0, P)$.

Fix any positive integer $j$ and any positive real  $\gamma$ in the unit interval.
Fix any positive integer $P$  and  define an integer $k$ from the condition $n_{k,j}\leq P < n_{k+1,j}$.
Then, 
\[
P=n_{k,j}+R_1, \text{ where }0\leq R_1< \tau_{k,j}.
\]
Observe that when $P$ is large enough, $k\geq \ell_j$.
Using the  definition of $N_{\alpha, \lambda_j,\gamma}$,
\[
N_{\alpha, \lambda_j,\gamma}(P)= N_{\alpha, \lambda_j,\gamma}(n_{\ell_j,j})+\sum_{r=\ell_j}^k N_{\alpha, \lambda_j,\gamma}(n_{r,j}, \tau'_{r,j}),
\]
where $\tau'_{r,j}=\tau_{r,j}$ for $r\in[\ell_j,k)$ and $\tau'_{k,j}=R_1$.
Let us estimate $N_{\alpha, \lambda_j,\gamma}(n_{r,j}, R)$ for $r\geq \ell_j$ and $0\leq R\leq\tau_{r,j}$.
The quantity $N_{\alpha, \lambda_j,\gamma}(n_{r,j}, R)$ is equal to the number of solutions of the system 
of inequalities
\begin{align*}
\left\{\frac{x}{\tau_{r,j}}\right\}< &\ \frac{R}{\tau_{r,j}},
\\
\{\alpha \lambda_j^{n_{r,j}+x}\} <&\ \gamma,
\end{align*}
 for  $x=0, \ldots, \tau_{r,j}-1$.
We apply Lemma~\ref{lemma:koksma} with $s=2$, $P=\tau_{r,j}$ and $n=A_{r,j}$ and obtain
\begin{align*}
&\left|N_{\alpha, \lambda_j,\gamma} (n_{r,j}, R)-\gamma \frac{R}{\tau_{r,j}}\tau_{r,j}\right|
\leq 
\\
&\qquad 30^2\left( \frac{\tau_{r,j}}{A_{r,j}}+ 
\sum_{m_1,m_2=-A_{r,j}}^{A_{r,j}} \!\!\!\!\!\!\!\!^{\Large '}
\ \ \ \ \ \ \ 
\frac{1}{\overline{m_1}\overline{m_2}}
\left|
\sum_{x=0}^{\tau_{r,j}-1} 
e\Big(2\pi i 
\Big(m_1  \alpha \lambda_j^{n_{r,j}+x} + \frac{m_2 x}{\tau_{r,j}}\Big)\Big)
\right|
\right).
\end{align*}
Using the definition of $\alpha_r$, for any $r\geq \ell_1$,
\[
\alpha=\alpha_r+\frac{a_r}{2^{n_r} q_r}+ \frac{\theta_r}{2^{n_{r+1}}},
\]
where $ 0\leq\theta_r\leq 2$, because 
\[
    \frac{\theta_r}{2^{n_{r+1}}} = 
    \sum_{k=r+1}^{\infty} \frac{a_k}{2^{n_k} q_k} < 
    \sum_{k=r+1}^{\infty} \frac{1}{2^{n_k}} = 
    \frac{1}{2^{n_{r+1}}} \sum_{k=r+1}^{\infty} \frac{1}{2^{n_k - n_{r+1}}} \leq
    \frac{2}{2^{n_{r+1}}}.
\]
Now, using the definition of $D_{r,j}(a_r)
=\sideset{}{'}\sum_{m_1,m_2=-A_{r,j}}^{A_{r,j}} \frac{|S_{r,j}(m_1,m_2,a_r)|}{\overline{m_1}\ \overline{m_2}}$ 
we obtain,
\begin{align*}
&\left|N_{\alpha, \lambda_j,\gamma} (n_{r,j}, R)-\gamma R \right|
\leq 
\\
&\qquad 30^2\left( \frac{\tau_{r,j}}{A_{r,j}}+ 
D_{r,j}(a_r) + \sum_{m_1,m_2=-A_{r,j}}^{A_{r,j}} \!\!\!\!\!\!\!\!^{\Large '}
\ \ \ \ \ \ \ 
\frac{1}{\overline{m_1}\overline{m_2}} \left| U(m_1,m_2,a_r)\right|
\right)
\end{align*}
where 
\[
\left|U(m_1,m_2, a_r)\right| = \left| S_{r,j}(m_1,m_2,a_r) - \sum_{x=0}^{\tau_{r,j}-1} e\Big(2\pi i \Big(  m_1  \alpha \lambda_j^{n_{r,j}+x} + \frac{m_2 x}{\tau_{r,j}} \Big)\Big)\right|.
\]
By the definition of $S_{r,j}(m_1,m_2,a_r)$, 
 the condition $0\leq \theta_r\leq 2$,
and the fact that for every pair of reals $\xi_1$ and $\xi_2$,
\[
| e(2\pi i \xi_1) -e(2 \pi i \xi_2)|=
2|\sin(\pi(\xi_1-\xi_2)))|
\leq 2\pi |\xi_1-\xi_2|,
\]
we find that 
\begin{align*}
\left|U(m_1,m_2, a_r) \right|
\leq& \ 2\pi \sum_{x=0}^{\tau_{r,j}-1}|m_1| \lambda_j^{n_{r,j}+x} \frac{\theta_r}{2^{n_{r+1}}}
\\
\leq &\  4\pi |m_1| \lambda_j^{n_{r+1},j}  \frac{1}{(\lambda_j-1) 2^{n_{r+1}}}
\\
\leq &\ \frac{4\pi |m_1|}{\lambda_j-1}.
\end{align*}
Then, using that  $ A_{r,j}\leq \sqrt{\tau_{r,j}}$, 
 the upper bound for $D_{r,j}(a_r) $ given  in Lemma~\ref{lemma2} for $r\geq \ell_j$,
and the inequality 
$\sum{^{\Large '}}_{m_1,m_2=-A_{r,j}}^{A_{r,j}}
\frac{1}{\overline{m_1}\overline{m_2}}\leq (3+\ln \tau_{r,j})^2$,
we obtain,
\begin{align*}
&|N_{\alpha,\lambda_j,\gamma}(n_{r,j}, R)-\gamma R| 
\\
&\qquad \leq 30^2\left( 2\sqrt{\tau_{r,j}}+ 2 \Big(\frac{\lambda_j}{\lambda_j-1}\Big)^{3/2}
\sqrt{\tau_{r,j}} (3+\ln \tau_{r,j})^2 \omega(r)+ \frac{4\pi}{\lambda_j-1}\sqrt{\tau_{r,j}}(3+\ln \tau_{r,j})^2\right)
\\
& \qquad \leq 30^2 15 \Big(\frac{\lambda_j}{\lambda_j-1}\Big)^{3/2} \sqrt{\tau_{r,j}}(3+\ln \tau_{r,j})^2 \omega(r).
\end{align*}
For  $k\geq \ell_j$, 
\[
N_{\alpha, \lambda_j,\gamma}(P)= N_{\alpha, \lambda_j,\gamma}(n_{\ell_j,j})+\sum_{r=\ell_j}^k N_{\alpha, \lambda_j,\gamma}(n_{r,j}, \tau'_{r,j}),
\]
and  
\[
P=n_{k,j}+R_1, \text{ where }0\leq R_1< \tau_{k,j}.
\]
So, we have
\[
| N_{\alpha, \lambda_j,\gamma}(P)-\gamma P|\leq |N_{\alpha, \lambda_j,\gamma}(n_{\ell_j,j})-\gamma n_{\ell_j,j}| + 
\sum_{r=\ell_j}^k 30^2 15 \Big(\frac{\lambda_j}{\lambda_j-1}\Big)^{3/2} \sqrt{\tau_{r,j}}(3+\ln \tau_{r,j})^2 \omega(r).
\]
and
\[
P\geq \tau_{k-1,j}\geq \frac{1}{4} \tau_{k,j}.
\]
It follows from  Lemma~\ref{lemma:easy} that 
\[
\sum_{r=\ell_j}^k \sqrt{\tau_{r,j}}
\leq 
\sum_{r=\ell_j}^k \sqrt{2^{r+1} \log_{\lambda_j} 2}
\leq
3 \sqrt{2^{k+2}\log_{\lambda_j} 2}
\leq 10\sqrt{\tau_{k,j}}.
\]
Let us show that, for $k\geq \ell_j$,
\[
\omega(P)\geq \omega(k).
\]
 Since $\omega(r)$ is a non-decreasing sequence, it is sufficient to show that, for  $k\geq \ell_j$,
\[
P\geq k.
\]
In fact, using the definitions of  $\ell_j$ and $n_{k,j}$, and $P=n_{k,j}+R_1$,
\[
k\geq \ell_j\geq 5,\quad 2^{\frac{k+1}{2}}\geq k+1 \mbox{ for }k\geq 5,
\]
and
\begin{align*}
P-k &\geq  n_{k,j}-k
\\&\geq   (2^k-2)\log_{\lambda_j} 2- k-1
\\&\geq  (\log_{\lambda_j}2 )(2^{k-1} -(k+1) \log_2 \lambda_j)
\\& \geq (\log_{\lambda_j}2 )(2^{k-1} -(k+1)  2^{\frac{\ell_j-3}{2}})
\\&\geq 2^{\frac{k-3}{2}}(\log_{\lambda_j }2) (2^{\frac{k+1}{2}} -k-1)
\\&\geq 0.
\end{align*}
Using the inequalities above and the obvious inequality
$|N_{\alpha,\lambda_j\gamma}(n_{\ell_j,j}) -\gamma n_{\ell_j,j}|\leq n_{\ell_j,j}$,
we have
\[
    |N_{\alpha,\lambda_j,\gamma} (P) - \gamma P| \leq 
n_{\ell_j,j} + 4 \ 10^5 \Big(\frac{\lambda_j}{\lambda_j - 1} \Big)^{3/2} \sqrt{P} (5 + \ln P)^2 \omega(P).
\]
The above inequality also holds for $k \leq \ell_j - 1$, since
\[
|N_{\alpha,\lambda_j\gamma} (P)-\gamma P|\leq P < n_{k+1,j} \leq n_{\ell_j,j}.
\]
Recalling the definition of $n_{\ell_j,j}$ we finally obtain
\[
|N_{\alpha, \lambda_j,\gamma}(P)-\gamma P|\leq 
2^{\ell_j} \log_{\lambda_j}2 + 4 \ 10^5 \Big(\frac{\lambda_j}{\lambda_j-1}\Big)^{3/2}\sqrt{P}(5+\ln P)^2\omega(P).
\]
Thus, 
the discrepancy of  the sequence $\{ \alpha \lambda_j^x\}_{x\geq 0}$, for any given positive integer  $P$, 
\[
D(P, \{\alpha \lambda_j^x\}_{x\geq 0} ) = 
\sup_{\gamma\in(0,1]} \left| 
\frac{N_{\alpha, \lambda_j, \gamma}(P)}{P}-\gamma\right|
\text{ is in }  O\Big(\frac{(\log P)^2}{\sqrt{P}} \omega(P)\Big).
\]
\end{proof}

\begin{corollary}[\cite{levin}]
Let $\lambda_j=j+1$, $t_j=2^j$ for $j=1,2,\ldots$, so $\ell_j\leq 2^{j+1} +1$ and  $\omega(P)\leq 2(5+\ln P)$.
Then, the constructed number $\alpha$ is absolutely normal
in Borel's sense, and for any integer $j\geq 2$, 
the discrepancy of $\{\alpha j^x\}$, for $x=0, \ldots, P-1$ is
\[
D(P, \{\alpha j^x\}_{x\geq 0} ) 
\leq \frac{2^{2j+1}}{P} \log_j 2+ 3\ 10^6 \frac{(5+\ln P)^3}{\sqrt{P}},  \]
which is  in $O\Big(\frac{(\log P)^3}{\sqrt{P}}\Big)$.
\end{corollary}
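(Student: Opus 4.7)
The plan is to derive this corollary as a direct specialization of Theorem~\ref{thm:levin} to the sequences $\lambda_j = j+1$ and $t_j = 2^j$. First, I would verify the two numeric side-claims in the hypothesis. For $\ell_j \leq 2^{j+1}+1$: since $\lambda_v = v+1 \geq 2$ we have $|\log_2\log_2\lambda_v|=\log_2\log_2(v+1)$, a double-logarithmic quantity that is dominated by $t_j = 2^j$ once $j$ is moderately large; a short case check of $j=1,2$ confirms the bound there. For $\omega(P)\leq 2(5+\ln P)$: if $\omega(P)=k$ then by definition $\ell_k\leq P$, and since $\ell_k\geq t_k=2^k$ we conclude $k\leq \log_2 P$, which is comfortably below $2(5+\ln P)$.

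Second, absolute normality in Borel's sense is immediate: the sequence $(\lambda_j)_{j\geq 1}=(2,3,4,\ldots)$ lists every integer base at least $2$, and Theorem~\ref{thm:levin} asserts that $\alpha$ is normal to each $\lambda_j$.

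Third, for the quantitative discrepancy bound I would extract the explicit counting inequality established near the end of the proof of Theorem~\ref{thm:levin},
\[
|N_{\alpha,\lambda_j,\gamma}(P)-\gamma P| \leq 2^{\ell_j}\log_{\lambda_j}2 + 4\cdot 10^5 \Big(\frac{\lambda_j}{\lambda_j-1}\Big)^{3/2}\sqrt{P}\,(5+\ln P)^2\,\omega(P),
\]
and plug in the specialized data: the first term is controlled using $\ell_j\leq 2^{j+1}+1$; the ratio $\lambda_j/(\lambda_j-1)=(j+1)/j\leq 2$ gives $((j+1)/j)^{3/2}\leq 2\sqrt{2}$; and $\omega(P)\leq 2(5+\ln P)$ turns the factor $(5+\ln P)^2\omega(P)$ into $2(5+\ln P)^3$. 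Merging the numerical constants (for instance $4\cdot 10^5 \cdot 2\sqrt{2}\cdot 2 < 3\cdot 10^6$) and dividing by $P$ to convert the count into a discrepancy yields the displayed explicit bound. The $O((\log P)^3/\sqrt{P})$ conclusion is then immediate, since for fixed $j$ the first term is $O(1/P)$, which decays faster than $(\log P)^3/\sqrt{P}$.

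The computation is essentially a substitution with no substantive obstacle; the heavy analytic work has already been carried out in Lemma~\ref{lemma2} and in the proof of Theorem~\ref{thm:levin}. The one point requiring care is notational bookkeeping, since the corollary reuses the symbol $j$ both as the construction's index (via $\lambda_j=j+1$) and as the running integer base appearing in $\{\alpha j^x\}_{x\geq 0}$; once this identification is fixed and the bounds above are tracked through cleanly, the corollary falls out.
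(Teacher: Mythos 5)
The paper itself gives no proof of this corollary (it simply cites Levin), so your route --- specializing the explicit inequality
$|N_{\alpha,\lambda,\gamma}(P)-\gamma P| \leq n_{\ell,\cdot} + 4\cdot 10^5 (\lambda/(\lambda-1))^{3/2}\sqrt{P}(5+\ln P)^2\omega(P)$
from the end of the proof of Theorem~\ref{thm:levin} --- is the intended and essentially the only available one. Your verification of $\omega(P)\leq 2(5+\ln P)$ (via $P\geq \ell_k\geq 2^k$), your handling of the ratio $(\lambda/(\lambda-1))^{3/2}\leq 2\sqrt{2}$, and the merge of constants $4\cdot 10^5\cdot 2\sqrt2\cdot 2<3\cdot 10^6$ for the second term are all correct, as is the observation that absolute normality in Borel's sense follows because $(\lambda_j)_{j\ge1}$ enumerates all integer bases.

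There is, however, a genuine gap in your treatment of the first term, which you dismiss with ``controlled using $\ell_j\leq 2^{j+1}+1$.'' The term coming out of the theorem's proof is $n_{\ell_i,i}\le 2^{\ell_i}\log_j 2$, where $i=j-1$ is the construction index of the base $j$ (you correctly flag the index shift but do not carry it through). Since $\ell_{j-1}=\max\bigl(2^{j-1},\,2\lceil\log_2\log_2 j\rceil+5\bigr)$ equals $2^{j-1}$ once $j\geq 4$, the bound you actually obtain is $2^{\ell_{j-1}}\log_j 2\le 2^{2^{j}+1}\log_j2$, a \emph{doubly} exponential constant in $j$; this is not $\leq 2^{2j+1}\log_j 2$ for any $j\geq 5$ (e.g.\ $j=5$ gives $2^{16}$ versus $2^{11}$). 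So the displayed first term does not follow from your substitution --- it coincides with $2^{\ell_{j-1}}$ only for $j=2,3$ and is almost certainly a misprint for a doubly exponential expression such as $2^{2^{j}+1}$. You should either flag this explicitly and prove the corrected bound, or exhibit a sharper estimate of $N_{\alpha,j,\gamma}(n_{\ell_{j-1},j-1})$ than the trivial one $|N-\gamma n|\le n_{\ell_{j-1},j-1}$ used in the theorem; no such estimate is available from the material in the paper. Note that the final asymptotic claim $O\bigl((\log P)^3/\sqrt{P}\bigr)$ is unaffected, since for fixed $j$ the first term is still $O(1/P)$.
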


Levin asserts that a similar method can be used for constructing a number~$\alpha$ such that,
given any integer $j$, the discrepancy of the sequence $\{\alpha \lambda_j^x\}_{x=0}^{P-1}$, 
is $O\Big(\frac{(\log P)^{3/2}}{\sqrt{P}}\omega(P)\Big)$, where the constant in the order symbol~$O$ 
depends on $\lambda_j$, and he gives as reference Section~2 of~\cite{levin1975}.

\section{About Levin's construction and its possible variants}

\subsection{Possible variants on the construction}

Here we consider other possible  values for $n_r$ and $q_r$ to run Levin's construction.
This is interesting because smaller values of $n_r$ imply  a faster computation at step $r$,  due to the fact that  $a_r$  is searched in a smaller range.
However, smaller values of $n_r$ imply a larger discrepancy of the sequence $\{\lambda_j^x \alpha\}_{x\geq 0}$.

The next Lemma~\ref{lemma:condqr} gives a sufficient condition for  $n_r$ and $q_r$ to ensure that the construction works.
Then, Lemma~\ref{lemma:condsqrt}   gives a sufficient condition on $n_r$ to ensure that the construction yields an 
absolutely normal number: the value $n_r$ must  be polynomial in $r$, with degree greater than~$1$.

\begin{lemma}\label{lemma:condqr}
    If $\lambda_j \geq 2$ and the sequences $n_1, n_2, \ldots$ and $q_1, q_2, \ldots$ satisfy for every positive $r$, 
\[
    2^{n_{r+1} - n_r + 1 + \frac{1}{2} \log (n_{r+1} - n_r + 1)} \leq q_r
\]
then the statement of Lemma~\ref{lemma:levin1} holds.
\end{lemma}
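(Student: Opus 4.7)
The plan is to trace through Levin's proof of Lemma~\ref{lemma:levin1} and isolate the single place where the concrete forms $n_r = 2^r-2$ and $q_r = 2^{2^r+r+1}$ actually enter. That place is the step that replaces the distance-to-nearest-integer $\langle\langle\cdot\rangle\rangle$ by the ordinary absolute value, which is legitimate as soon as
\[
\left|\frac{m_1 (\lambda_j^{n_{r,j}+x} - \lambda_j^{n_{r,j}+y})}{2^{n_r} q_r}\right| \le \frac{1}{2}.
\]
Every other ingredient in that proof --- the generic geometric-sum inequality bounding $|\sum e(2\pi i \theta x)|/N$ by $\min(1, 1/(2N\langle\langle\theta\rangle\rangle))$, the vanishing of the $m_1 = 0$ contribution (which only requires $A_{r,j} < \tau_{r,j}$), and the telescoping geometric summation that yields the factor $(\lambda_j/(\lambda_j-1))^3$ --- depends only on $n_{r,j}$, $\tau_{r,j}$, $A_{r,j}$, and $\lambda_j$, and is therefore insensitive to the particular choice of the sequences $n_r$ and $q_r$.

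Thus the whole task reduces to verifying the displayed half-bound under the new hypothesis. I would use two general estimates that do not rely on the original sequences. First, from $n_{r+1,j} = \lfloor n_{r+1}\log_{\lambda_j}2\rfloor$ one has $\lambda_j^{n_{r+1,j}} \le 2^{n_{r+1}}$. Second, because $\lambda_j \ge 2$ gives $\log_{\lambda_j}2 \le 1$, the definition of $\tau_{r,j}$ yields $\tau_{r,j} \le n_{r+1}-n_r+1$, whence $A_{r,j}\le \sqrt{n_{r+1}-n_r+1}$. Combining,
\[
2 A_{r,j}\, \lambda_j^{n_{r+1,j}} \le 2 \sqrt{n_{r+1}-n_r+1}\; 2^{n_{r+1}} = 2^{n_r}\cdot 2^{\,n_{r+1}-n_r+1+\frac12\log(n_{r+1}-n_r+1)},
\]
and the hypothesis says precisely that the second factor on the right is at most $q_r$. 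Dividing by $2^{n_r} q_r$ delivers the required half-bound and justifies the replacement $\langle\langle\cdot\rangle\rangle \to |\cdot|$.

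With this single step re-established, the remainder of Levin's argument copies verbatim and the conclusion $T_{r,j}(m_1,m_2) < 2\bigl(\lambda_j/(\lambda_j-1)\bigr)^{3/2}\sqrt{\tau_{r,j}}$ follows. The only subtlety is the side condition $\tau_{r,j}\ge 2$ needed so that when $m_1=0$ the remaining sum in a primitive $\tau_{r,j}$-th root of unity vanishes; this is a mild growth requirement on $n_r$ and is implicit in any reasonable instantiation. The main obstacle I anticipate is purely cosmetic --- ensuring the factor $\frac{1}{2}$ is recovered cleanly; this may cost a small constant that can be absorbed either by tightening $\lambda_j^{n_{r+1,j}} \le 2^{n_{r+1}-1}$ via the strict floor or by observing that the hypothesis can be written with $\le$ rather than $<$, but it does not alter the structure of the argument.
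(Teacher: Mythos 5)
Your proof is correct and takes essentially the same approach as the paper's: both isolate the inequality $|m_1|(\lambda_j^{n_{r,j}+x}-\lambda_j^{n_{r,j}+y}) \leq \frac{1}{2}\,2^{n_r}q_r$ as the only step of Lemma~\ref{lemma:levin1} that depends on the choice of $n_r$ and $q_r$, and both verify it from the hypothesis via $\tau_{r,j}\leq n_{r+1}-n_r+1$ (which is where $\lambda_j\geq 2$ enters) together with $\lambda_j^{n_{r+1,j}}\leq 2^{n_{r+1}}$. Your bookkeeping of the factor $\frac12$ works out, and your side remark that the $m_1=0$ case still needs $\tau_{r,j}\geq 2$ is a detail the paper's own proof passes over silently.
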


\begin{proof}
    In Lemma~\ref{lemma:levin1}, every step of the proof is valid disregarding 
    the values chosen for $n_1, n_2, \ldots$ and $q_1, q_2, \ldots$ except for the statement
    \[
        |m_1|(\lambda_j^{n_{r,j} + x} - \lambda_j^{n_{r,j} + y}) \leq \frac{1}{2} 2^{n_r} q_r.
    \]
    We show that the condition given by this lemma is sufficient to make the above inequality true.
    Let us recall that 
    $ n_{r,j} = \lfloor n_r \log_{\lambda_j} 2 \rfloor $,
    $\tau_{r,j} = n_{r+1,j} - n_{r,j}$, $ 0 \leq x,y < \tau_{r,j} $ and
    $ |m_1| \leq A_{r,j} = \lfloor \sqrt{\tau_{r,j}} \rfloor $.
    Then,   
    \begin{eqnarray*}
        q_r & \geq & 2^{n_{r+1} - n_r + 1 + \frac{1}{2}\log_2(n_{r+1} - n_r + 1)} \\
            & = & \sqrt{n_{r+1} - n_r + 1} \ 2^{n_{r+1} - n_r + 1} \\
            & \geq & \sqrt{(n_{r+1} \log_{\lambda_j} 2 - n_r \log_{\lambda_j} 2) + 1} \ 2^{n_{r+1} - n_r + 1} \\
            & \geq & \sqrt{n_{r+1,j} - n_{r,j}} \ 2^{n_{r+1} - n_r + 1} \\
            & = & \sqrt{\tau_{r,j}} \ 2^{n_{r+1} - n_r + 1} \\
            & \geq & |m_1| 2^{n_{r+1} - n_r + 1} \\
            & = & 2 |m_1| 2^{n_{r+1}} {2^{-n_r}} \\
            & > & 2 |m_1| \lambda_j^{n_{r+1,j}} \lambda_j^{-(n_{r,j}+1)} \\
            & = & 2 |m_1| \lambda_j^{n_{r+1,j} - n_{r,j} - 1} \\
            & = & 2 |m_1| \lambda_j^{\tau_{r,j}-1} \\
            & > & 2 |m_1| (\lambda_j^{\tau_{r,j}-1} -1) \\
            & \geq & 2 |m_1| \frac{\lambda_j^{n_{r,j}}}{2^{n_r}} (\lambda_j^{\tau_{r,j}-1} -1) \\
            & \geq & 2 |m_1| \frac{\lambda_j^{n_{r,j}}}{2^{n_r}} (\lambda_j^x - \lambda_j^y) \\
            & = & \frac{2}{2^{n_{r,j}}} |m_1| (\lambda_j^{n_{r,j} + x} - \lambda_j^{n_{r,j} + y}).
    \end{eqnarray*}
\end{proof}

In what follows we use  customary asymptotic notation to describe the growth rate of the functions.
We write,
\\\begin{tabular}{lll}
$f(n) $ is in $\Theta(g(n))$ & if $\exists k_1>0\ \exists k_2>0\ \exists n_0\ \forall n>n_0$, & $ k_1 g(n) \leq f(n) \leq k_2 g(n)$,
\\
$f(n)$ is in $o(g(n))$ &if $	\forall k>0 \ \exists n_0 \ \forall n>n_0$, & $ |f(n)| \leq k |g(n)|$.
\end{tabular}

\begin{lemma}\label{lemma:condsqrt}
    Let $j$ and $P$ be positive integers and let $k$ be such that $n_{k,j} \leq P < n_{k+1,j}$.
    If $\sum_{r=1}^k \sqrt{n_{r+1,j} - n_{r,j}}$ is in 
$o\Big(\frac{P}{(\log P)^2 \omega(P)}\Big)$ then Levin's construction yields an absolutely normal number.
\end{lemma}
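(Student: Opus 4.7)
The plan is to retrace the proof of Theorem~\ref{thm:levin} with the specific choices $n_r=2^r-2$ and $q_r=2^{2^r+r+1}$ replaced by general sequences $(n_r)$ and $(q_r)$ satisfying Lemma~\ref{lemma:condqr}, isolating precisely the single step where geometric growth of $\tau_{r,j}$ was used. Lemma~\ref{lemma:condqr} guarantees that Lemma~\ref{lemma:levin1} still holds for the new sequences, and the averaging argument in Lemma~\ref{lemma2} uses nothing beyond Cauchy--Schwarz and Lemma~\ref{lemma:levin1}, so it transfers verbatim. As a consequence, the intermediate inequality derived inside the proof of Theorem~\ref{thm:levin},
\[
|N_{\alpha,\lambda_j,\gamma}(n_{r,j},R) - \gamma R| \leq C_j \sqrt{\tau_{r,j}}(3+\ln \tau_{r,j})^2 \omega(r), \qquad r\geq \ell_j,\ 0\leq R\leq \tau_{r,j},
\]
with $C_j$ depending only on $\lambda_j$, remains valid under the general choice.

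Next I would sum this bound over $r=\ell_j,\ldots,k$, where $k$ is defined by $n_{k,j}\leq P < n_{k+1,j}$, and add the boundary term $|N_{\alpha,\lambda_j,\gamma}(n_{\ell_j,j})-\gamma n_{\ell_j,j}|\leq n_{\ell_j,j}$. Exactly as in the original proof, one uses $\omega(r)\leq \omega(P)$ (which follows from $P\geq k$ for $k\geq \ell_j$ and the non-decreasing character of $\omega$) and bounds each $\ln \tau_{r,j}$ by a constant multiple of $\log P$. This yields the analogue of Levin's key estimate, but without the geometric collapse,
\[
|N_{\alpha,\lambda_j,\gamma}(P)-\gamma P| \leq n_{\ell_j,j} + C'_j (\log P)^2 \omega(P) \sum_{r=\ell_j}^{k} \sqrt{\tau_{r,j}}.
\]

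Dividing by $P$ gives
\[
D(P,\{\alpha\lambda_j^x\}_{x\geq 0}) \leq \frac{n_{\ell_j,j}}{P} + C'_j\,\frac{(\log P)^2 \omega(P)}{P} \sum_{r=\ell_j}^{k}\sqrt{\tau_{r,j}}.
\]
For fixed $j$ the first term vanishes since $n_{\ell_j,j}$ is a constant, and the hypothesis asserts precisely that the second term vanishes as $P\to\infty$. Hence $\{\alpha\lambda_j^x\}_{x\geq 0}$ is uniformly distributed for every $j$, so by Wall's theorem $\alpha$ is normal to each base $\lambda_j$, i.e.\ absolutely normal.

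The main point to watch is that every other estimate in the proof of Theorem~\ref{thm:levin} is genuinely independent of the specific $n_r=2^r-2$. In particular the control of the correction term $|U(m_1,m_2,a_r)|$ relies on the bound $0\leq \theta_r\leq 2$, which in turn requires $\sum_{k>r}1/2^{n_k}\leq 2/2^{n_{r+1}}$; this holds whenever $(n_r)$ is strictly increasing with $n_k-n_{r+1}\geq k-r-1$, a property already implied by any admissible choice under Lemma~\ref{lemma:condqr}. Checking this regularity, and ensuring that the standard bound $\sum{}'_{m_1,m_2}1/(\overline{m_1}\,\overline{m_2})\leq (3+\ln \tau_{r,j})^2$ is unaffected, is the only routine verification that must be carried out before the argument above goes through.
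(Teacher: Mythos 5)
Your proof is correct and takes essentially the same route as the paper: the paper's own proof of this lemma is literally a one-line pointer to the estimate for $|N_{\alpha,\lambda_j,\gamma}(P)-\gamma P|$ in the proof of Theorem~\ref{thm:levin}, and your proposal simply spells out that argument, correctly identifying that the only step depending on the specific choice $n_r=2^r-2$ is the geometric collapse of $\sum_{r=\ell_j}^k\sqrt{\tau_{r,j}}$, which the hypothesis replaces.
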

\begin{proof}
    See proof of Theorem~\ref{thm:levin} for the upper bound of $|N_{\alpha,\lambda_j,\gamma}(P) - \gamma P|$.
\end{proof}

The next proposition shows that if  $n_r$  dominates  any  linear function on $r$, 
and $q_r$ is increasing in $r$ according to a condition in the  the growth of $n_r$,
then  Levin's construction yields an absolutely normal number,

\begin{proposition}\label{prop:nr}
Let $(\lambda_j)_{j\geq 1}$ be a sequence or real numbers greater than $1$ and let 
$(t_j)_{j\geq 1}$ be a sequence of reals such that the function  $\omega(P)$ has  sub-polynomial growth.
If  $n_r$ is  any  polynomial on $r$ with degree greater than~$1$, 
and $q_r$ is such that  
\[
    n_{r+1} - n_r + 1 + \frac{1}{2} \log (n_{r+1} - n_r + 1) \leq \log q_r,
\]
then Levin's construction yields an absolutely normal number. 
However, if $n_r$ is linear in $r$, Levin's arguments do not prove that the discrepancy goes to~$0$. 
\end{proposition}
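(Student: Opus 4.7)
My plan is to split the proposition into its two assertions: a positive one, that a polynomial $n_r$ of degree $d>1$ (with the prescribed $q_r$) yields absolute normality, and a negative one, that a linear $n_r$ prevents Levin's argument from concluding vanishing discrepancy. The positive assertion reduces to verifying the hypotheses of Lemmas~\ref{lemma:condqr} and~\ref{lemma:condsqrt}; the negative assertion reduces to tracing the relevant sum in the proof of Theorem~\ref{thm:levin} and showing it is of order $P$.

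For the positive part, note that the condition on $q_r$ stated in the proposition is exactly the hypothesis of Lemma~\ref{lemma:condqr}, so Lemmas~\ref{lemma:levin1} and~\ref{lemma2} apply unchanged to the modified construction. It remains to verify the hypothesis of Lemma~\ref{lemma:condsqrt}. Writing $n_r$ as a polynomial of degree $d>1$, elementary estimates give
\[
n_{r+1,j}-n_{r,j} = \Theta(r^{d-1})
\]
with constant depending on $\lambda_j$, whence
\[
\sum_{r=1}^{k}\sqrt{n_{r+1,j}-n_{r,j}} = \Theta(k^{(d+1)/2}).
\]
Since $P\in[n_{k,j},n_{k+1,j})$ forces $k=\Theta(P^{1/d})$, this sum is $\Theta(P^{(d+1)/(2d)})$. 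Because $(d+1)/(2d)<1$, the quotient
\[
\frac{\sum_{r=1}^{k}\sqrt{\tau_{r,j}}}{P/((\log P)^2\omega(P))} = \Theta\bigl(P^{-(d-1)/(2d)}(\log P)^2\omega(P)\bigr)
\]
tends to $0$ (using the sub-polynomial growth of $\omega(P)$ and the polylogarithmic factor $(\log P)^2$, all dominated by a negative power of $P$), so Lemma~\ref{lemma:condsqrt} delivers absolute normality.

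For the negative part, I re-examine Levin's proof of Theorem~\ref{thm:levin}. The key per-block estimate is
\[
|N_{\alpha,\lambda_j,\gamma}(n_{r,j},R)-\gamma R| \ll \sqrt{\tau_{r,j}}(3+\ln\tau_{r,j})^2\omega(r),
\]
which is then summed over $r=\ell_j,\dots,k$; the final bound $O((\log P)^2\omega(P)/\sqrt{P})$ hinges on the geometric growth $\tau_{r,j}\asymp 2^{r}\log_{\lambda_j}2$, which telescopes the sum of square roots to $O(\sqrt{\tau_{k,j}})=O(\sqrt{P})$. When $n_r=cr+c'$ is linear, the differences $\tau_{r,j}=n_{r+1,j}-n_{r,j}$ are bounded by a constant depending only on $c$ and $\lambda_j$, so $\sum_{r=\ell_j}^{k}\sqrt{\tau_{r,j}}=\Theta(k)=\Theta(P)$. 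Consequently Levin's upper bound on the discrepancy becomes of order $(\log P)^2\omega(P)$, which does not tend to $0$; hence Levin's argument cannot certify uniform distribution of $\{\alpha\lambda_j^x\}_{x\geq 0}$ when $n_r$ is linear.

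The only real obstacle I foresee is keeping the dependence on $j$ transparent in the positive part: the implicit constants involve $\log_{\lambda_j}2$, and I must check they do not accumulate harmfully when establishing normality base by base. Since the proposition asks for normality to each fixed $\lambda_j$ separately, the constants are permitted to depend on $j$, matching the convention of Theorem~\ref{thm:levin}, and this difficulty evaporates.
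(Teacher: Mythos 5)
Your proposal is correct and follows essentially the same route as the paper: verify the hypothesis of Lemma~\ref{lemma:condsqrt} by computing $\sum_{r=1}^{k}\sqrt{\tau_{r,j}}=\Theta\big(P^{(d+1)/(2d)}\big)$ via $\tau_{r,j}=\Theta(r^{d-1})$ and $k=\Theta(P^{1/d})$, with the linear case giving $\Theta(P)$ and hence no decay. If anything you are slightly more explicit than the paper's own proof, which does not spell out the appeal to Lemma~\ref{lemma:condqr} for the $q_r$ hypothesis nor the check that the polylogarithmic and sub-polynomial $\omega(P)$ factors are absorbed by the negative power of $P$.
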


\begin{proof}
Suppose   $n_r$ is polynomial on $r$.
Then, there is some $h$ such that  $n_r$ in $\Theta(r^h)$.
By definition of $n_{r,j}$, we have
$n_{r,j} = \lfloor n_r \log_{\lambda_j} 2 \rfloor $ is in $\Theta (r^h)$.
Hence,   $n_{r+1,j} - n_{r,j}$ is in $\Theta(r^{h-1})$;    
therefore, $\sqrt{n_{r+1,j} - n_{r,j}}$ in $\Theta(r^{\frac{h-1}{2}})$.
Furthermore, if $P$ and $k$ are such that $n_{k,j} \leq P < n_{k+1,j}$, then $k$ is in $\Theta(\sqrt[h]{P})$.
Thus,
\[
\sum_{r=1}^k \sqrt{n_{r+1,j} - n_{r,j}} \text{ is in } \Theta \Big(\big(\sqrt[h]{P}\big)^{\frac{h+1}{2}}\Big) = \Theta\big(P^{\frac{h+1}{2h}}\big).
\]
If $n_r$ were   a linear function on $r$,
\[
\sum_{r=1}^k \sqrt{n_{r+1,j} - n_{r,j}} \text{ is in }\Theta(P),
\] 
hence $\sum_{r=1}^k \sqrt{n_{r+1,j} - n_{r,j}}$ would not be in the required class $o\Big(\frac{P}{(\log P)^2 \omega(P)}\Big)$.
We conclude that, to obtain a normal number with Levin's construction,   $n_r$ can not be linear in $r$.
Instead, $n_r$ can be any  polynomial on $r$ with degree greater than~$1$ 
provided that  $\omega(P)$ is chosen to have  sub-polynomial growth.
\end{proof}

In Levin's construction smaller values  of  $n_r$  imply a larger upper bound on discrepancy of the sequence $\{\alpha \lambda^x\}$.
The following table shows the bound for the discrepancy of the sequence $\{ \lambda_j^x \alpha\}_{x= 0}^{P}$, obtained using Levin's proof  for different choices of $n_r$.
In each case the constant behind the $O$ symbol depends on $\lambda_j$.
\begin{center}
\begin{tabular}{c | l}
    $n_r$ & Discrepancy bound given by Levin's proof\\
    \hline
\\ 
    $r$ & $O( \log(P)^2 \omega(P) )$   ---it does not go to $0$ when $P$ goes to $\infty$---
\\\\
    $r^h$ & $O\Big(\frac{\log(P)^2 \omega(P)}{P^{\frac{h-1}{2h}}}\Big)$ 
\\\\
    $2^r - 2$ & $O\Big(\frac{\log(P)^2 \omega(P)}{\sqrt{P}}\Big)$ 
\end{tabular}
\end{center}

In all these cases, the upper bound for discrepancy  contains  $\omega(P)$, as in Levin's formulation
and the  constant hidden in the $O$ symbol depends on the base $\lambda_j$.
Although Levin states that for any nondecreasing function $\omega(P)$ his construction produces
an absolutely normal real number,  the growth of $\omega(P)$ cannot be arbitrary.
For example, when $n_r$ is $2^r-2$,  $\omega(P) = \sqrt{P}$   does not give a discrepancy bound going to $0$.

\subsection{Necessary conditions on the construction}

Levin's construction is not  conceived as the concatenation of the binary expansions of the $a_r$ for $r = 1,2,\ldots$. 
This means that the expansion in base $2$ of  $\alpha_{r+1}$ is {\em not} obtained as a concatenation of 
the expansion of $\alpha_r$ with the base-$2$ representation of $a_{r}$.
Recall the definition of $\alpha_{r+1}$:   $\alpha_{\ell_1}$ is equal to a starting real number $a$ (argument for the construction)
and for every $r\geq \ell_1$,
\[
\alpha_{r+1}=\alpha_{\ell_1}+\sum_{m={\ell_1}}^r    \frac{a_m}{2^{n_{m}} {q_m}},
\]
where $a_m$ is an integer in $[0, q_m)$ satisfying the conditions of Lemma \ref{lemma2},
\[
n_m=2^m-2  \text{ and } q_m= 2^{2^m+m+1}.
\]
Since 
$ \log q_r = 2^r+r+1 > n_{r+1} - n_r = 2^r$
we have
\[ \alpha_{r+1} - \floor{2^{n_{r+1}}\alpha_{r+1}}2^{-n_{r+1}} > 0.
\]

The next Lemma~\ref{lemma:overlap} shows that if $q_r$ is unbounded,  
then it is necessary for Levin's proof that $ q_r > 2^{n_{r+1} - n_r}$.
This condition is implied by the sufficient condition on $n_r $ and $q_r$ we identified in Lemma~\ref{lemma:condqr}.

Then, Lemma \ref{lemma:qrbounded} proves that if $q_r$ id bounded then Levin's construction does not yield an absolutely normal number.

\begin{lemma}\label{lemma:overlap}
    If  $q_r$ is unbounded then it is necessary that $q_r > 2^{n_{r+1} - n_r}$.
\end{lemma}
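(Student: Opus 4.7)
The plan is to show that if one drops the condition $q_r > 2^{n_{r+1}-n_r}$, the critical step in the proof of Lemma~\ref{lemma:levin1}---the one that replaces the nearest-integer-distance $\langle\langle\,\cdot\,\rangle\rangle$ by the plain absolute value inside the min-bound---cannot be performed. Since every subsequent estimate ($T_{r,j}^2(m_1,m_2)$, hence $D_{r,j}(c)$, hence the selection of $a_r$ via Lemma~\ref{lemma2}) is obtained by exploiting this reduction, identifying when it fails pinpoints the necessary condition on $q_r$. The assumption that $q_r$ is unbounded lets us restrict to arbitrarily large $r$, where the asymptotic estimates $\lambda_j^{n_{r,j}}\geq 2^{n_r}/\lambda_j$ and $\lambda_j^{n_{r+1,j}}\leq 2^{n_{r+1}}$ (from Lemma~\ref{lemma:easy} and the definition of $n_{r,j}$) are sharp.

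First, I would extract the pivotal inequality from Levin's proof: for every admissible $m_1,x,y$ with $|m_1|\leq A_{r,j}$ and $0\leq y<x<\tau_{r,j}$ one needs
\[
\left|\,\frac{m_1\bigl(\lambda_j^{n_{r,j}+x}-\lambda_j^{n_{r,j}+y}\bigr)}{2^{n_r}q_r}\,\right|<\frac{1}{2}.
\]
Without this the argument reverts to a $\min(1,1/(2q_r\langle\langle\,\cdot\,\rangle\rangle))$ bound in which the $\langle\langle\,\cdot\,\rangle\rangle$ values can be arbitrarily close to zero, breaking the $O(\sqrt{\tau_{r,j}})$ bound on $T_{r,j}(m_1,m_2)$.

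Second, I would evaluate the extremal instance $m_1=1,\ y=0,\ x=\tau_{r,j}-1$, which saturates the left-hand side. Using $\lambda_j^{n_{r+1,j}-1}\geq \lambda_j^{n_{r+1,j}}/\lambda_j$ and the estimate $\lambda_j^{n_{r,j}+1}\geq 2^{n_r}$ from the proof of Lemma~\ref{lemma:levin1}, together with $\lambda_j^{n_{r+1,j}}\leq 2^{n_{r+1}}$, the expression is asymptotically $2^{n_{r+1}-n_r}/(\lambda_j q_r)$. For the bound $<1/2$ to hold, a rearrangement then forces $q_r>c(\lambda_j)\cdot 2^{n_{r+1}-n_r}$ with $c(\lambda_j)\to 1$ as $\lambda_j\to 2^+$; since the $\lambda_j$ may be arbitrary reals greater than $1$, the uniform requirement is exactly $q_r>2^{n_{r+1}-n_r}$.

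The main obstacle is conceptual rather than computational: the strict reading of "necessary" would require showing that no alternative argument recovers the $O(\sqrt{\tau_{r,j}})$ bound when $q_r\leq 2^{n_{r+1}-n_r}$. To handle this I would exhibit, in that regime, pairs $(x,y)$ with $|m_1|=1$ for which the quantity $m_1(\lambda_j^{n_{r,j}+x}-\lambda_j^{n_{r,j}+y})/(2^{n_r}q_r)$ is comparable to (or exceeds) a nonzero integer for \emph{every} $c\in[0,q_r)$, so that the contribution of such terms to $T_{r,j}^2(m_1,m_2)$ is bounded below by a constant multiple of $\tau_{r,j}^2$ rather than $\tau_{r,j}$. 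This would demonstrate that the conclusion of Lemma~\ref{lemma:levin1}, and consequently the existence-of-$a_r$ conclusion of Lemma~\ref{lemma2}, genuinely fail unless $q_r>2^{n_{r+1}-n_r}$.
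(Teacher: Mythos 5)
You correctly identify the pivotal inequality $|m_1(\lambda_j^{n_{r,j}+x}-\lambda_j^{n_{r,j}+y})| \leq \frac{1}{2}2^{n_r}q_r$ as the step whose failure is at stake, and this is exactly the paper's starting point. But your extremal instance is wrong: $m_1=1$ does \emph{not} saturate the left-hand side; $|m_1|=A_{r,j}=\lfloor\sqrt{\tau_{r,j}}\rfloor$ does, and that factor is what the argument lives on. With $m_1=1$ and $\lambda_j=2$ (the cleanest case, where $n_{r,j}=n_r$ and $\tau_{r,j}=n_{r+1}-n_r$), the constraint you derive is $(2^{n_{r+1}-n_r-1}-1)/q_r<\frac{1}{2}$, i.e.\ $q_r>2^{n_{r+1}-n_r}-2$; the value $q_r=2^{n_{r+1}-n_r}-1$ satisfies this yet violates the lemma's conclusion, so your argument cannot deliver the bound $q_r>2^{n_{r+1}-n_r}$. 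The patch via ``$c(\lambda_j)\to 1$ as $\lambda_j\to 2^+$'' does not close this gap: the sequence $(\lambda_j)$ is fixed and need not contain bases near $2$, and even at $\lambda_j=2$ you only recover the constant $1$ with an additive slack. A further warning sign is that you never genuinely use the hypothesis that $q_r$ is unbounded, whereas the statement needs it (the bounded case is treated separately in Lemma~\ref{lemma:qrbounded}).

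The paper's proof takes $\lambda_j=2$, $m_1=A_{r,j}$, $x=\tau_{r,j}-1$, $y=0$, so the necessary inequality becomes $A_{r,j}(2^{n_{r+1}-n_r}-2)\leq q_r$. Assuming for contradiction that $q_r\leq 2^{n_{r+1}-n_r}$, this yields $A_{r,j}\leq q_r/(q_r-2)$; unboundedness of $q_r$ then forces $A_{r,j}<2$ for all large $r$, hence $n_{r+1}-n_r<9$, hence $q_r\leq 2^{n_{r+1}-n_r}<512$, contradicting $q_r\geq 1000$. The missing idea in your proposal is precisely this interplay between the amplitude factor $A_{r,j}$ and the unboundedness of $q_r$. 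Your third paragraph's more ambitious plan --- showing that the conclusion of Lemma~\ref{lemma:levin1} itself fails by bounding $T^2_{r,j}$ from below --- is not needed (the paper reads ``necessary'' as necessary for Levin's proof step to go through) and is in any case only sketched, not carried out.
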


\begin{proof}
For Lemma~\ref{lemma:levin1} to hold, we need that $|m_1| (\lambda_j^{n_{r,j} + x} - \lambda_j^{n_{r,j} + y}) \leq \frac{1}{2} 2^{n_r} q_r$.
In particular, when $m_1 = A_{r,j}$, $x = \tau_{r,j}-1$, $y = 0$ and $\lambda_j = 2$, we need
that  the following inequality holds:
\[
    A_{r,j} (2^{n_{r+1}-1} - 2^{n_r}) \leq \frac{1}{2} 2^{n_r} q_r. 
\]
Equivalently,
\[
    A_{r,j} (2^{n_{r+1} - n_r} - 2) \leq q_r.
\]
Now  suppose that,
$q_r$  is unbounded, non-decreasing in $r$ but, 
 contrary to the statement of the Lemma,
$ q_r \leq 2^{n_{r+1} - n_r}$.
So,  the above condition becomes
\[
 A_{r,j}  \leq \frac{q_r}{(q_r - 2)}.
\]
Since $q_r$ is unbounded, there is  $r_0$ such that  for every $r\geq r_0$, $q_r \geq 1000$,
and  each of the following inequalities should hold.
\begin{align*}
    A_{r,j} \leq \frac{q_r}{q_r - 2} \leq \frac{1000}{998} <&2
\\
    A_{r,j}=\lfloor \sqrt{n_{r+1} - n_r} \rfloor <& 2
\\
    \sqrt{n_{r+1} - n_r} <& 3
\\
    n_{r+1} - n_r <& 9
\\
    2^{n_{r+1} - n_r} <& 512.
\end{align*}
Then, using the assumption $ q_r \leq 2^{n_{r+1} - n_r}$,
we conclude $q_r<512$, 
contradicting that  $q_r \geq 1000$.
\end{proof}

The following lemma shows that if $q_r$ is bounded by a constant, then Levin's construction 
yields a number $\alpha$ which might not be absolutely normal.

\begin{lemma}\label{lemma:qrbounded}
If $q_r$ is bounded by a constant and $\log q_r \leq n_{r+1} - n_r$ then Levin's construction 
does not ensure absolute normality.
\end{lemma}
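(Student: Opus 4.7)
The plan is to show that when $q_r$ is bounded, the constructed number $\alpha$ has, for the permissible input $a=0$, a binary expansion so structured that some integer multiple of $\alpha$ fails to be normal to base~$2$; invoking Wall's classical theorem that normality to a fixed base is preserved by multiplication by nonzero rationals then forces $\alpha$ itself to fail normality to base~$2$, hence to fail absolute normality.

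Concretely, I would let $M$ be an upper bound for $(q_r)$ and set $Q=\operatorname{lcm}(1,2,\ldots,M)$, a fixed positive integer divisible by every $q_r$. Setting $b_r:=(Q/q_r)a_r$ gives $b_r\in\{0,1,\ldots,Q-1\}$, and one rewrites
\[
  Q\alpha \;=\; \sum_{r\ge\ell_1}\frac{b_r}{2^{n_r}}.
\]
Each summand $b_r/2^{n_r}$ is a dyadic rational whose only possibly nonzero binary digits lie in the window $[n_r-\lceil\log_2 Q\rceil+1,\,n_r]$ of at most $\lceil\log_2 Q\rceil$ bit-positions.

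Next I would use the growth of $n_r$ intended for Levin's construction---either the original exponential choice $n_r=2^r-2$ or any super-linear $n_r$ permitted by Proposition~\ref{prop:nr}---to argue that the gaps $n_{r+1}-n_r$ eventually exceed $\lceil\log_2 Q\rceil$, so consecutive windows are disjoint and are separated by all-zero blocks whose lengths tend to infinity. A short calculation then shows that any carry escaping a window lands in such a zero block and is absorbed without further propagation. Hence the set of bit positions of $Q\alpha$ that can hold a~$1$ lies in a union of windows, a subset of $\mathbb{N}$ of density zero. Thus the asymptotic frequency of the digit~$0$ in the binary expansion of $Q\alpha$ is $1$ rather than $1/2$, so $Q\alpha$ is not normal to base~$2$; by Wall's theorem, neither is $\alpha=(1/Q)\,Q\alpha$, which finishes the argument.

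The chief technical obstacle is the carry-propagation analysis for the series $\sum_r b_r/2^{n_r}$: one must confirm that carries originating inside a window do not cross the zero block separating it from the next window. This is exactly where the hypothesis $\log q_r\le n_{r+1}-n_r$ combines with the super-linear growth of $n_r$ to guarantee that each such carry terminates inside the following zero block without affecting another window.
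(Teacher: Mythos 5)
Your central idea---bound the number of positions of the binary expansion that can carry a $1$ and conclude that simple normality to base $2$ fails---is the same counting argument the paper uses, but you package it differently and the two versions do not cover exactly the same ground. What your version adds: by passing to $Q\alpha$ with $Q=\mathrm{lcm}(1,\dots,M)$ and invoking Wall's theorem on nonzero rational multiples, you handle bounded $q_r$ that are not powers of $2$; the paper's proof tacitly assumes that each $a_r/(2^{n_r}q_r)$ has a finite binary expansion of at most $C$ bits, which is literally true only when $q_r$ is a power of $2$. Your carry worry also resolves cleanly: once the windows are pairwise disjoint and each tail $\sum_{r'>r} b_{r'}2^{-n_{r'}}$ is below $2^{-n_r}$ (which the gap condition gives for large $r$), there are no carries at all, so the $1$-bits of $Q\alpha$ really do lie in the union of windows, a set of density $0$ precisely when $n_r/r\to\infty$.

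What your version misses is that the hypotheses of the lemma ($q_r$ bounded and $\log q_r\le n_{r+1}-n_r$) do not force $n_r$ to be super-linear: they are compatible with, say, $n_r=Cr$, and in that regime the gaps $n_{r+1}-n_r$ stay bounded, your windows need never become disjoint nor have density zero, and the argument collapses. You sidestep this by declaring that you only consider ``the growth of $n_r$ intended for Levin's construction,'' but the lemma as stated makes no such restriction. The paper closes this case by running the count in the contrapositive direction: if $\alpha$ were simply normal to base $2$, the bound $\mathrm{count}(\alpha,n_r,1)\le C(r-1)$ forces $n_r\in O(r)$, hence $n_r\in\Theta(r)$, and Proposition~\ref{prop:nr} shows that Levin's discrepancy estimates cannot certify normality for linear $n_r$---so normality is ``not ensured'' in either branch. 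To complete your proof you should add that second branch (for bounded gaps, appeal to Proposition~\ref{prop:nr} directly). Where your argument does apply, it yields the stronger conclusion that $\alpha$ is actually not normal to base $2$, rather than merely that the construction's normality proof breaks down.
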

\begin{proof}
For ease of presentation assume  the argument  $a$ in Levin's construction  is a 
non-negative rational  number of the form $r+ p/2^{\ell_1}$ for some 
non-negative integers $r$ and $p$ with $p$ less than $\ell_1$.
So, the  expansion of $a$ in base $2$ has at most $2^{\ell_1}$  significant digits.
The case where $a$  is not of this form  can be proved similarly.

Suppose that $q_r$ is bounded, then $\log_2 q_r$ will be bounded too. That is, there is a constant $C$
such that for all $r$, $\log_2 q_r \leq C$.
Since $a_r$  is in $[0, q_r)$, at step $r$, the choice of $a_r$ requires at most $C$ binary digits. 
Suppose also that  $\log_2 q_r \leq n_{r+1} - n_r$, which implies that the fractions $\frac{a_r}{2^{n_r} q_r}$ 
have binary expansions that  not overlap. 
Therefore, the first $n_r$ bits of $\alpha$ will be correctly computed on the $r$-th step of the construction,
that is the  first $n_r$ bits of $\alpha$ will coincide with those of  $\alpha_r$.

Let $0.b_0 b_1 b_2 \ldots$ be the binary expansion of $\alpha$ and let $\text{count}(\alpha, n,b)$ be the number of
bits equal to $b$ within $b_0, b_1, \ldots, b_{n-1}$. Assuming $\alpha$ is absolutely normal, it must be simply normal in base $2$.
Therefore $\lim_{n \to \infty} \frac{\text{count}(\alpha,n,b)}{n} = \frac{1}{2}$
 for $b$ in $\{0,1\}$.
Using the definition of limit, for all positive $\epsilon$, for every sufficiently large $r$,
\[
    \frac{1}{2}-\epsilon <  \frac{\text{count}(\alpha,n_r,1)}{n_r} \leq \frac{C(r-1)}{n_r}.
\]
So,
\[
    n_r < \frac{Cr - C}{\frac{1}{2} - \epsilon}.
\]
Since this holds all positive $\epsilon$, we conclude
\[ n_r \text{ in } O(r). 
\]
On the other hand, we can safely assume that $\log_2 q_r \geq 1$ because at least $1$ bit should be computed 
on each step of the algorithm. Given that $n_{r+1}-n_r \geq \log_2 q_r \geq 1$ we obtain 
that $n_r$ must be in $\Theta(r)$.
As we stated on Proposition~\ref{prop:nr}, a linear growth of $n_r$ does not ensure that discrepancy 
goes to $0$. So we cannot ensure absolute normality of the generated number~$\alpha$.
\end{proof}

More importantly, this necessary  condition on $n_r$ and $q_r$
determines that Levin's construction of the number $\alpha$ is not doable as a concatenation of the $a_r$, for $r=1,2, \ldots$.

\begin{proposition}\label{prop:concatenation}
If $q_r$ and $n_r$ are such that $\log q_r > n_{r+1} - n_r  $
then Levin's  construction of $\alpha$ is not doable as the concatenation of the $a_r$, for $r=1,2, 3\ldots$.
\end{proposition}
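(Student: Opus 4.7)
The plan is to reduce the proposition to a counting argument after pinning down the operational meaning of ``concatenation of the $a_r$''. A concatenation-based construction would compute the binary expansion of $\alpha - a$ as a stream in which each step $r$ emits a block of length $n_{r+1} - n_r$ encoding $a_r$; formally, there exist encodings $E_r \colon [0,q_r)\to\{0,\ldots,2^{n_{r+1}-n_r}-1\}$ with
\[
\alpha - a \;=\; \sum_{r\geq \ell_1}\frac{E_r(a_r)}{2^{n_{r+1}}}.
\]
The width $n_{r+1}-n_r$ is forced: it is precisely the number of new bits required to bring the precision of $\alpha$ from $n_r$ up to $n_{r+1}$, which is what the partial sum after step $r$ is supposed to deliver in a concatenation scheme.

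The second step is to compare this with Levin's formula $\alpha - a = \sum_{r\geq \ell_1} a_r/(2^{n_r} q_r)$ coordinate by coordinate. Holding every $a_{r'}$ with $r' \neq r$ fixed and varying $a_r$ must produce equal increments on both sides, forcing the identity
\[
E_r(a_r) - E_r(a_r') \;=\; (a_r - a_r')\, 2^{n_{r+1}-n_r}/q_r
\]
for all $a_r, a_r' \in [0, q_r)$. For the right-hand side to be an integer when $a_r - a_r' = 1$, we would need $q_r \mid 2^{n_{r+1}-n_r}$; in particular $\log q_r \leq n_{r+1} - n_r$, directly contradicting the hypothesis.

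The cleanest phrasing of the contradiction is purely combinatorial, and it is the version I would put in the final write-up: the map $a_r \mapsto a_r/(2^{n_r} q_r)$ is injective on $[0, q_r)$, so any equivalent concatenation representation $a_r \mapsto E_r(a_r)/2^{n_{r+1}}$ would force $E_r$ itself to be injective; but $|[0, q_r)| = q_r > 2^{n_{r+1}-n_r}$, while the codomain has only $2^{n_{r+1}-n_r}$ elements, so no injective $E_r$ exists. The main obstacle I foresee is dismissing variants of the concatenation idea that allow variable-width blocks or post-hoc reordering of bits; I would handle this by insisting on the defining feature of concatenation---the block contributed at step $r$ occupies the positions $n_r+1, \ldots, n_{r+1}$ disjoint from every other block---and noting that this feature is already incompatible with the hypothesis at the level of each single~$r$.
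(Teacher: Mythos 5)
Your proof is correct and rests on the same bit-counting obstruction as the paper's: $a_r$ ranges over $q_r$ values but a concatenation scheme allots it only $n_{r+1}-n_r$ binary positions, which $\log q_r > n_{r+1}-n_r$ makes impossible. The paper phrases this cumulatively (requiring $\sum_{m<r}\log q_m \leq n_r$ and telescoping), while you localize it to a single step via pigeonhole on the encoding $E_r$; this is a cosmetic difference, not a different route.
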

\begin{proof}
To run the construction as a concatenation of the $a_r$, for $r = 1,2,3,\ldots$, we need that 
\[
    \sum_{m=0}^{r-1} \log q_m \leq n_r.
\]
But
\[
    \sum_{m=0}^{r-1} \log q_m > \sum_{m=0}^{r-1} n_{m+1} - n_m = n_r - n_0 = n_r.
\]
\end{proof}

\section{Levin's normal numbers are computable}

The theory of computability defines a computable function from non-negative integers to non-negative
integers as one which can be effectively calculated by some algorithm.  The definition extends to
functions from one countable set to another, by fixing enumerations of those sets.  A real number $x$ is
computable if there is a base and a computable function that gives the digit at each position of the
expansion of $x$ in that base.  Equivalently, a real number is computable if there is a computable
sequence of rational numbers $(r_n)_{n\geq 0}$ such that $|x -r_n| < 2^{-n}$ for each~$n\ge 0$.

\begin{theorem}[Turing   {\cite[Theorem 5.1.2]{downeyhirschfeldt}}]\label{thm:computable}
    The following are equivalent:
    \begin{enumerate}
        \item The real $x$ is computable.
        \item There is a computable sequence of rationals $(r_n)_{n\geq 0}$ that tends to $x$ such that 
            $|x - r_n| < 2^{-n}$ for all $n$.
        \item There is a computable sequence of rationals 
            $(r_n)_{n\geq 0}$ that converges to $x$ and a computable function 
            $f:\N \to \N$ such that $|x - r_{f(n)}| < 2^{-n}$ for all $n$.
    \end{enumerate}
\end{theorem}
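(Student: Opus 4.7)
The theorem collects three equivalent ways to witness computability of a real: (1) via a computable digit expansion in some base, (2) via a computable rational Cauchy sequence with a fixed modulus $2^{-n}$, and (3) via a computable sequence of rational approximations together with a computable modulus function $f$. My plan is to split the equivalence into two pieces, $(2) \Leftrightarrow (3)$ and $(1) \Leftrightarrow (2)$, each comprising two short implications.

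The equivalence $(2) \Leftrightarrow (3)$ is formal. The direction $(2) \Rightarrow (3)$ holds by taking $f$ to be the identity function. Conversely, given $(r_n)_{n \geq 0}$ and $f$ as in (3), the reindexed sequence $s_n = r_{f(n)}$ is a composition of two computable functions, hence computable, and satisfies $|x - s_n| < 2^{-n}$ by hypothesis, which is precisely (2).

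For $(1) \Rightarrow (2)$, the argument is truncation: if the base-$b$ expansion of $x$ is computable for some integer $b \geq 2$, set $k_n = \lceil n / \log_2 b \rceil$ and let $r_n$ be the truncation of $x$ to $k_n$ base-$b$ digits. Then $(r_n)$ is a computable sequence of rationals and $|x - r_n| \leq b^{-k_n} \leq 2^{-n}$.

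The substantive step is $(2) \Rightarrow (1)$. Given a computable $(r_n)$ with $|x - r_n| < 2^{-n}$, one extracts the binary expansion of $x$ bit by bit: to determine the $k$-th bit, search for the least $c \geq 1$ such that the interval $(r_{k+c} - 2^{-(k+c)}, \, r_{k+c} + 2^{-(k+c)})$ lies strictly between two consecutive multiples of $2^{-k}$, and read the bit from the side of the interval. The main obstacle is that this search fails to terminate exactly when $x$ is a dyadic rational, and no algorithm can detect dyadic-ness from a Cauchy sequence alone. I would resolve this by case analysis: if $x$ is rational, then a computable function giving its digits exists trivially, since the digit expansion is eventually periodic and can be produced by a Turing machine that hard-codes the numerator and denominator, so (1) holds; if $x$ is irrational, then $x$ is not dyadic in any integer base, the search always terminates, and the procedure yields a computable digit function, again witnessing (1).
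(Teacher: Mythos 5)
The paper does not prove this statement at all: it is quoted verbatim as a known classical result (Turing, via Downey--Hirschfeldt, Theorem 5.1.2) and used as a black box in the proof of Theorem~\ref{thm:computability}. So there is no ``paper's proof'' to compare against; your argument stands or falls on its own, and it is correct --- it is essentially the standard textbook proof. The decomposition $(2)\Leftrightarrow(3)$ by reindexing and $(1)\Rightarrow(2)$ by truncation are routine; the only substantive point is $(2)\Rightarrow(1)$, and you correctly identify the genuine obstruction (a terminating search for the $k$-th bit fails precisely when $x$ is a multiple of $2^{-k}$, and rationality of $x$ is not decidable from a Cauchy name) and correctly resolve it with a non-uniform case split on whether $x$ is rational. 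That non-uniformity is legitimate here because ``$x$ is computable'' is an existence claim about \emph{some} machine, not a claim that one can be produced uniformly from the data of (2); this is exactly why the equivalence, though true, is not effective, and your write-up makes that distinction cleanly. Two cosmetic points: in $(1)\Rightarrow(2)$ your truncation only gives $|x-r_n|\le b^{-k_n}\le 2^{-n}$, with equality possible if the expansion has an all-$(b-1)$ tail, so take $k_n+1$ digits to get the strict inequality demanded by (2); and in the irrational case of $(2)\Rightarrow(1)$ it is worth one sentence observing that the integers $m$ found for successive $k$ are automatically nested, so the extracted bits form a single consistent expansion.
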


\begin{theorem}\label{thm:computability}
Let $(\lambda_j)_{j\geq 1}$ be computable sequence of integers greater than $2$ 
 let $(t_j)_{j\geq 1}$ be a computable sequence of integers monotonically increasing at any speed
and let the starting value $a$ be  a rational number,
Then, the number $\alpha$ defined by Levin, proved to be absolutely normal in Theorem~\ref{thm:levin}, is computable.
\end{theorem}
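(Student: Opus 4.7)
The plan is to invoke Theorem~\ref{thm:computable}(3) by exhibiting a computable sequence of rationals $(\alpha_r)_{r \geq \ell_1}$ converging to $\alpha$ together with a computable modulus of convergence $f : \N \to \N$. I would first observe that each $\alpha_r$ is rational by construction, and establish computability by induction on $r$. The base case $\alpha_{\ell_1} = a$ is rational by hypothesis, and $\ell_1$ itself is computable from the inputs. For the inductive step, assuming $\alpha_r$ is a known rational, all the auxiliary integers $n_r$, $q_r$, $n_{r,j}$, $\tau_{r,j}$, $A_{r,j}$ and $\omega(r)$ are computable integers from the inputs. Thus the only delicate step is to effectively locate the least $a_r \in [0, q_r)$ that satisfies, for every $j \in [1, \omega(r)]$, the strict inequality
\[
    D_{r,j}(a_r) < 2\Big(\frac{\lambda_j}{\lambda_j - 1}\Big)^{3/2} \sqrt{\tau_{r,j}}\,(3 + \ln \tau_{r,j})^2\,\omega(r)
\]
provided by Lemma~\ref{lemma2}.

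The main obstacle is that strict comparison of two computable reals is only semi-decidable in general; I would resolve this by showing the two sides can never coincide. The left-hand side $D_{r,j}(c)$ is an algebraic number: each $S_{r,j}(m_1, m_2, c)$ is a finite sum of exponentials $e(2\pi i \theta)$ with $\theta$ rational (since $\alpha_r \in \mathbb{Q}$, $\lambda_j^{n_{r,j}+x} \in \N$, and $m_2 x / \tau_{r,j} \in \mathbb{Q}$), so every such exponential is a root of unity, and hence $|S_{r,j}(m_1,m_2,c)|$ and $D_{r,j}(c)$ are algebraic. The right-hand side has the form $A (3+L)^2$ where $A$ is a positive algebraic number and $L = \ln \tau_{r,j}$; since Lemma~\ref{lemma:easy} gives $\tau_{r,j} \geq 7$ whenever $j \in [1, \omega(r)]$, Lindemann's theorem makes $L$ transcendental, and expanding $(3+L)^2 = 9 + 6L + L^2$ shows that $A(3+L)^2$ is transcendental (no nonzero polynomial with algebraic coefficients can vanish at $L$). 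Hence the two sides differ, and by computing each to successively higher precision the strict inequality is effectively decided for every candidate $c$. Lemma~\ref{lemma2} guarantees at least one $c \in [0, q_r)$ fulfills the joint condition, so the search for the least such $c$ always terminates, and $\alpha_{r+1} = \alpha_r + a_r/(2^{n_r} q_r)$ is a computable rational.

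It remains to provide a computable modulus of convergence. The proof of Theorem~\ref{thm:levin} already yields the tail estimate $|\alpha - \alpha_{r+1}| \leq 2 / 2^{n_{r+1}} = 2^{3 - 2^{r+1}}$, which decreases double-exponentially in $r$. Hence a computable function such as $f(n) = \max(\ell_1, \lceil \log_2(n+3)\rceil)$ satisfies $|\alpha - \alpha_{f(n)+1}| < 2^{-n}$, and clause (3) of Theorem~\ref{thm:computable} gives the computability of $\alpha$.
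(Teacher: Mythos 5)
Your proof is correct and shares the paper's overall skeleton: show by induction that each $\alpha_r$ is a computable rational, use the tail estimate $|\alpha-\alpha_r|<2/2^{n_r}$ extracted from the proof of Theorem~\ref{thm:levin}, and conclude via Theorem~\ref{thm:computable}. Where you genuinely depart from the paper is at the one delicate point, the effective search for $a_r$. The paper's proof simply asserts that $D_{r,j}$ is computable and that one can ``compute bitwise approximations of $D_{r,j}$ from above'' until a satisfactory candidate appears; taken literally this only semi-decides the strict inequality of Lemma~\ref{lemma2}, so a failing candidate is never rejected in finite time and the \emph{least} admissible $a_r$ is not obviously attainable. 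You close that gap: since the $\lambda_j$ are integers and $\alpha_r$ is rational, every exponential in $S_{r,j}(m_1,m_2,c)$ is a root of unity, so $D_{r,j}(c)$ is algebraic, while the threshold is $A(3+\ln\tau_{r,j})^2\,\omega(r)$ with $A$ positive algebraic and $\ln\tau_{r,j}$ transcendental by Lindemann (using $\tau_{r,j}\geq 7$ from Lemma~\ref{lemma:easy}), hence transcendental; the two sides therefore never coincide and each comparison is genuinely decidable, so the sequential search for the least admissible $c$ provably terminates by Lemma~\ref{lemma2}. This is also the only place where the hypothesis that the $\lambda_j$ are integers is actually put to work, which the paper leaves implicit. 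Your modulus-of-convergence bookkeeping at the end is routine and correct (the paper instead appeals to clause (2) of Theorem~\ref{thm:computable} together with the irrationality of $\alpha$). One small point worth flagging: the algorithm box states the threshold without the factor $\omega(r)$, whereas Lemma~\ref{lemma2} (which supplies the existence guarantee) includes it; you correctly search against the inequality with $\omega(r)$, which is the version the termination argument supports.
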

\begin{proof}
The number $\alpha$ is the limit of $\alpha_r$ for $r$ going to infinity,  where
$a_{\ell_1}=a$ with 
\\
$\ell_1=  \max(t_1,  2 \lceil |\log_2\log_2 \lambda_1| \rceil + 5)$, 
and for $r\geq 1$,
\[
\alpha_{r+1}=\alpha_r +   \frac{a_r}{2^{n_{r}} {q_r}},
\]
where
$a_r$ is an integer in $[0, q_r)$ satisfying the inequalities of Lemma~\ref{lemma2},
$n_r=2^r-2$ and 
$q_r= 2^{2^r+r+1}$.
Lemma~\ref{lemma2}  proves  that such $a_r$ exists. 
Since   $D_{r,j}(c)$ is a computable function  it is possible to 
find $a_r$ by an exhaustive search among all integers in $[0, q_r)$ and all bases $\lambda_j$ for $j=1,2,\ldots, \omega(r)$,
where         $ \omega(r) = 1$ if  $r $ in $[1,\ell_2)) $, otherwise   $\omega(r)$ is  the  unique index  $k $ such that  $r $ in $ [\ell_k, \ell_{k+1})$,
with 
$\ell_k = \max(t_k, \max_{1 \leq v \leq k} 2 \lceil |\log_2\log_2 \lambda_v| \rceil + 5)$.
At each step $r$, we can compute bitwise approximations  of $D_{r,j}$ from above,  for each of the 
possible candidate values of $a_r$ until we find one that satisfies the requires inequality for all~$j$  between 
$1$ and $\omega(r)$.
Thus, the  sequence of rationals $\alpha_1, \alpha_2, \ldots$ is computable and converges 
to  an absolutely normal  number~$\alpha$.
From the proof of Theorem \ref{thm:levin} we know that, for each~$r$, 
\[
\left|\alpha - \alpha_r\right| < \frac{2}{2^{n_r}}. 
\]
Since $\alpha$ is an absolutely normal number, and therefore an irrational number, 
by Theorem \ref{thm:computable} we conclude that $\alpha$ is computable.
\end{proof}

\section{The computational complexity of Levin's construction}

Theorem \ref{thm:computability}  proves that under some assumptions of the sequences $(\lambda_j)_{j\geq 1}$ and $(t_j)_{j\geq 1}$, and the starting value $a$, 
 Levin's construction  is indeed an algorithm to compute the number~$\alpha$.
The algorithm is recursive. 

The standard computational model is the Turing machine model, which works  just with   
finite representations, so it only  deals with  numbers that are the limit of a computable sequence of finite approximations. 
In this model, at step $r$, the number of elementary  operations needed to find out the  
number~$a_r$ can not be easily determined.
This is because to find out $a_r$ the algorithm must  compute sums of exponential sums. 
The terms in these sums are transcendental numbers, which can only be computed as limits of  finite approximations. 
It is impossible to determine how many approximations to each term of the exponential sums 
must be computed to find out that a candidate $a_r$ is conclusive. 

So, instead of counting the number of elementary operations needed to compute the number $a_r$ at step~$r$,
here we give the number of mathematical  operations needed in  an idealized computational 
model over the real numbers, based on machines with infinite-precision real numbers.  
A canonical model for this form of computation over the reals  is Blum-Shub-Smale machine~\cite{blumshubsmale}, abbreviated  BSS machine.
This is a machine with registers that can store arbitrary real numbers and  can compute rational functions over reals at unit cost.
Since  elementary trascendental functions, as exponential function or trigonometric functions, 
are not computable by a BSS machine we need to consider the extended  BSS machine 
which includes exponential and trigonometric functions as primitive operations. 
For our purpose, the extended BSS model is identical to considering Boolean 
arithmetic circuits augmented  with trigonometric functions.

Of course, for any given real valued function, its  complexity in the BSS model gives just 
a lower bound  of its complexity in the classical Turing machine model, where 
the  cost for  arithmetic (and trigonometric) operations over the real numbers  is not constant.

\begin{theorem}\label{thm:complexity}
    Let $(\lambda_j)_{j\geq 1}$ be a computable sequence of reals greater than~$1$ and let $(t_j)_{j\geq 1}$ 
 be a computable sequence of integers.
Levin's algorithm requires 
\[
O\Big(2^{2^r + 3r + 1} \sum_{j=1}^{\omega(r)} (\log_{\lambda_j} 2)^2\Big)
\] 
mathematical operations to compute $\alpha_r$, for each $r$. 
\end{theorem}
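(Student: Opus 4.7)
The plan is to bound the worst-case cost of one iteration of the outer loop, i.e.\ the work needed to determine $a_r$ and update $\alpha_r$ to $\alpha_{r+1}$, since the cost of printing and the arithmetic update $\alpha_{r+1}=\alpha_r + a_r/(2^{n_r}q_r)$ is a constant number of BSS operations and is dominated by the search itself. The search is a brute-force scan over $c\in[0,q_r)$, so the overall cost is $q_r$ times the cost of evaluating, for each $j\in[1,\omega(r)]$, the quantity $D_{r,j}(c)$ and comparing it to the threshold from Lemma~\ref{lemma2}.

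First I would bound the cost of one evaluation of $D_{r,j}(c)$. By definition, it is a sum over the pairs $(m_1,m_2)\in\{-A_{r,j},\dots,A_{r,j}\}^2\setminus\{(0,0)\}$, so it has fewer than $(2A_{r,j}+1)^2\le 9\tau_{r,j}$ terms, where I use $A_{r,j}^2\le\tau_{r,j}$. Each term is $|S_{r,j}(m_1,m_2,c)|/(\overline{m_1}\,\overline{m_2})$, and $S_{r,j}(m_1,m_2,c)$ is itself a sum of $\tau_{r,j}$ complex exponentials, each costing $O(1)$ in the extended BSS model where trigonometric functions are primitive. Hence $D_{r,j}(c)$ is computed in $O(\tau_{r,j}^{2})$ mathematical operations.

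Next I would use Lemma~\ref{lemma:easy} to bound $\tau_{r,j}\le 2^{r+1}\log_{\lambda_j}2$, so
\[
\sum_{j=1}^{\omega(r)}\tau_{r,j}^{2}\ \le\ 2^{2r+2}\sum_{j=1}^{\omega(r)}(\log_{\lambda_j}2)^{2},
\]
and therefore the total work to evaluate $D_{r,j}(c)$ for every $j\in[1,\omega(r)]$ is $O\!\bigl(2^{2r+2}\sum_{j=1}^{\omega(r)}(\log_{\lambda_j}2)^{2}\bigr)$ per candidate $c$. Multiplying by the at-most $q_r=2^{2^r+r+1}$ candidates tried in the worst case gives
\[
O\!\left(2^{2^r+r+1}\cdot 2^{2r+2}\sum_{j=1}^{\omega(r)}(\log_{\lambda_j}2)^{2}\right)
\ =\ O\!\left(2^{2^r+3r+1}\sum_{j=1}^{\omega(r)}(\log_{\lambda_j}2)^{2}\right),
\]
which is the claimed bound; the auxiliary computations of $n_r$, $q_r$, $\ell_k$, $\omega(r)$, $\tau_{r,j}$ and $A_{r,j}$ are all polynomial in $r$ and $\omega(r)$ and are absorbed in the $O$-constant.

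The only subtle point, and the part that most needs to be stated carefully, is the choice of computational model. Because each $S_{r,j}$ is a sum of transcendental values, a strict Turing-machine cost would have to include the precision needed to compare $D_{r,j}(c)$ against the threshold, which is not uniformly bounded; this is exactly the reason the theorem is phrased in the extended BSS model, where exp/trig operations are charged unit cost. Under that convention the counting above is routine, and no obstacle remains beyond making the per-term unit-cost accounting explicit.
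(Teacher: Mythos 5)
Your per-iteration accounting is correct and matches the paper's own proof almost line for line: $S_{r,j}(m_1,m_2,c)$ is a sum of $\tau_{r,j}$ unit-cost terms, $D_{r,j}(c)$ requires $O(A_{r,j}^2)=O(\tau_{r,j})$ evaluations of $S_{r,j}$ and hence $O(\tau_{r,j}^2)$ operations, and the worst-case scan over all $c\in[0,q_r)$ and all $j\in[1,\omega(r)]$, combined with $\tau_{r,j}\le 2^{r+1}\log_{\lambda_j}2$ from Lemma~\ref{lemma:easy} and $q_r=2^{2^r+r+1}$, yields exactly the claimed bound. Your closing remark about why the extended BSS model (rather than Turing-machine cost) is the right setting is also precisely the paper's motivation for phrasing the theorem this way.

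There is, however, one step you omit. The theorem bounds the cost of computing $\alpha_r$, and $\alpha_r$ is only available after running \emph{every} iteration of the outer loop from $\ell_1$ up to $r$; your proof explicitly restricts itself to ``one iteration of the outer loop,'' i.e.\ the search for a single $a_r$. The total cost is $\sum_{k} C_k$ with $C_k = 2^{2^k+3k+1}\sum_{j=1}^{\omega(k)}(\log_{\lambda_j}2)^2$, and the paper finishes by verifying that this sum is dominated by its last term: $\sum_{k=1}^{r-1} C_k \le (r-1)\,C_{r-1}$, which is in $O\bigl(2^{2^r+3r+1}\sum_{j=1}^{\omega(r)}(\log_{\lambda_j}2)^2\bigr)$ because the $C_k$ grow doubly exponentially in $k$. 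This accumulation step is easy, but without it you have only bounded the marginal cost of the $r$-th step, not the cost of producing $\alpha_r$; you should add it to close the argument.
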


\begin{proof}
    Assume a BSS machine  which includes exponential and trigonometric functions as primitive operations. 
    The expression $S_{r,j}(m_1,m_2,c)$ is the sum of $\tau_{r,j}$ terms, 
     each of them can be computed in constant time in our machine. 
    Hence the time needed to compute each value of $S_{r,j}$ is in $O(\tau_{r,j})$.

    To obtain a value of $D_{r,j}$ we must calculate $O({A_{r,j}}^2) = O(\tau_{r,j})$ values of $S_{r,j}$.
    Therefore, the computation of $D_{r,j}$ is in $O({\tau_{r,j}}^2) = O((2^r \log_{\lambda_j} 2)^2)$.

    Finding the value of $a_r$ requires to compute $D_{r,j}(c)$ for  each $j$ between $1$ and $\omega(r)$
    until we find a value of $c$ in $[0,q_r)$ which satisfies the inequalities of Lemma~\ref{lemma2}.
   In the worst case, it will be necessary to try all possible values for $c$. 
   In this worst case, the required time 
    is in  
\medskip
\\   
$O\Big(\sum_{c=0}^{q_r-1} \sum_{j=1}^{\omega(r)} (2^r \log_{\lambda_j}2)^2\Big) = 
  O\Big(q_r \sum_{j=1}^{\omega(r)}(2^r \log_{\lambda_j}2)^2\Big) =
  O\Big(2^{2^r + 3r + 1} \sum_{j=1}^{\omega(r)}(\log_{\lambda_j}2)^2\Big).$
\medskip
\\
Let $C_k$ be   the time required to compute $a_k$,
   \[
    C_k = 2^{2^k + 3k + 1} \sum_{j=1}^{\omega(k)} (\log_{\lambda_j} 2)^2
   \]
   Then, the  time to compute $\alpha_r$ is 
   $\sum_{k=1}^r C_k$. 
   We now show that the time needed  to compute $\alpha_r$ is 
   essentially the time spent on the search of $a_r$. 
   We need to show that   $\sum_{k=1}^{r} C_k $ is in $O(C_r)$, because 
   \begin{align*}
           \sum_{k=1}^{r-1} C_k \ \leq \  (r-1) C_{r-1} 
      \ = \ (r-1) 2^{2^{r-1} + 3(r-1) + 1} \sum_{j=1}^{\omega(r-1)} (\log_{\lambda_j} 2)^2
   \end{align*}
   which is in 
   $ O\Big(2^{2^r + 3r + 1} \sum_{j=1}^{\omega(r)} (\log_{\lambda_j} 2)^2\Big)$.
\end{proof}

Notice that Theorem \ref{thm:complexity} estimates the complexity of  obtaining a rational approximation $\alpha_r$ 
with an error bounded by $2^{-(n_{r+1}-1)}$. 
Since  $\alpha_r$ is just an approximation to $\alpha$, 
it is not determined how many  bits in the expansion of $\alpha_r$ are conclusive so as to conform the expansion of $\alpha$.
One would like that the   first $n_{r+1}-1$ bits of $\alpha_r$ determine those of $\alpha$.
As we showed in Proposition \ref{prop:concatenation} Levin's  construction is not doable as the concatenation of the values $a_r$.
An overlapping of the fractions $\frac{a_r}{2^{n_r} q_r}$ may occur, causing carries and changing some of the first bits of $\alpha_r$.

Theorem \ref{thm:complexity} proves that the complexity of computing $\alpha_r$ 
with Levin's original formulation for $n_r$ and $q_r$, is double exponential in $r$.
Since $n_r$ is the  number of bits of $\alpha_r$  that are obtained at step $r$,
and in Levin's original formulation $n_r$ is $2^r-2$,  it is fair to say that  the complexity of Levin's algorithm  is simply exponential in 
the number  of bits computed at step $r$.

We now prove that, in case $n_r$ is quadratic in $r$,
 then Levin's algorithm requires a number of operations that is is simply exponential in 
the square root of number of bits computed at step $r$.

\begin{theorem}
The alternative of Levin's construction with $n_r = r^2$ takes 
\[
O\big(r^3 2^{2r} \sum_{j=1}^{\omega(r)} (\log_{\lambda_j} 2)^2\Big)
\] 
mathematical operations in an extended  BSS machine  to compute $\alpha_r$.
\end{theorem}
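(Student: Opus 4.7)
The plan is to mirror the step-by-step cost accounting of the proof of Theorem~\ref{thm:complexity}, substituting $n_r = r^2$ (together with a corresponding choice of $q_r$) into each of the three ingredients: the cost of evaluating a single $S_{r,j}(m_1,m_2,c)$, the cost of assembling $D_{r,j}(c)$ from these, and the cost of the exhaustive search for $a_r$.

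First, I would fix $q_r$ via Lemma~\ref{lemma:condqr}. With $n_r = r^2$ the difference $n_{r+1} - n_r = 2r+1$, so the sufficient condition
\[
\log q_r \geq n_{r+1} - n_r + 1 + \tfrac{1}{2}\log(n_{r+1} - n_r + 1)
\]
reads $\log q_r \geq 2r + 2 + \tfrac{1}{2}\log(2r+2)$, and it suffices to take $q_r$ as the least power of $2$ exceeding $\sqrt{2r+2}\,2^{2r+2}$, so in particular $q_r$ is in $O(\sqrt{r}\,2^{2r})$. From $n_{r,j} = \lfloor r^2 \log_{\lambda_j} 2\rfloor$ one then reads off $\tau_{r,j}$ in $O(r \log_{\lambda_j} 2)$ and $A_{r,j} = \lfloor \sqrt{\tau_{r,j}}\rfloor$ in $O(\sqrt{r \log_{\lambda_j} 2})$.

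Second, I would rerun the three per-step cost estimates in the extended BSS model. Evaluating one $S_{r,j}(m_1,m_2,c)$ sums $\tau_{r,j}$ constant-cost terms, so uses $O(\tau_{r,j})$ operations. Evaluating $D_{r,j}(c)$ requires $O(A_{r,j}^2) = O(\tau_{r,j})$ such calls, hence $O(\tau_{r,j}^2) = O(r^2 (\log_{\lambda_j} 2)^2)$ operations. The worst-case search for $a_r$ sweeps all $c \in [0,q_r)$ and for each tests the inequality of Lemma~\ref{lemma2} for every $j \in [1,\omega(r)]$, giving a step-$r$ cost
\[
C_r \text{ in } O\!\left(q_r \sum_{j=1}^{\omega(r)} r^2 (\log_{\lambda_j} 2)^2\right) = O\!\left(r^{5/2}\, 2^{2r} \sum_{j=1}^{\omega(r)} (\log_{\lambda_j} 2)^2\right).
\]

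Third, I would aggregate the per-step costs over $k = 1,\ldots,r$. The qualitative difference with Theorem~\ref{thm:complexity} appears here: in the original construction $C_k$ grew doubly exponentially, so the coarse bound $\sum_{k<r} C_k \leq (r-1)\,C_{r-1}$ sat trivially in the same $O$-class as $C_r$; now the ratio $C_k/C_{k-1}$ tends to $4$, and $\sum_{k=1}^{r-1} C_k$ is a nearly geometric sum dominated by a constant multiple of $C_{r-1}$, and hence of $C_r$. Since $r^{5/2} \leq r^3$ for $r \geq 1$, the total cost lies in $O\!\bigl(r^3\, 2^{2r} \sum_{j=1}^{\omega(r)}(\log_{\lambda_j} 2)^2\bigr)$, as announced. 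The only mildly delicate point is precisely this geometric-sum domination, which is routine once one observes that the polynomial factor $k^{5/2}$ is swallowed by the bounded ratio of the exponential factors.
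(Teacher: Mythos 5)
Your proposal is correct and follows essentially the same route as the paper's proof: fix $q_r$ via Lemma~\ref{lemma:condqr}, bound the cost of one $S_{r,j}$ by $O(\tau_{r,j})$, of one $D_{r,j}$ by $O(\tau_{r,j}^2)$, sweep all $c\in[0,q_r)$ and all $j\le\omega(r)$, and then absorb the sum over earlier steps into the last step. The only (harmless) differences are that you take a slightly smaller $q_r$ in $O(\sqrt{r}\,2^{2r})$ where the paper takes $q_r=2^{2r+2+\lceil\log(2r+2)\rceil}$ in $O(r\,2^{2r})$, so your per-step bound $O\big(r^{5/2}2^{2r}\sum_{j=1}^{\omega(r)}(\log_{\lambda_j}2)^2\big)$ is marginally tighter before you relax it to the stated $r^3$, and your geometric-sum aggregation is just a more explicit version of the paper's one-line estimate $\sum_{k=1}^{r}k^{3}2^{2k}\sum_{j=1}^{\omega(k)}(\log_{\lambda_j}2)^2\le r^{3}2^{2r+2}\sum_{j=1}^{\omega(r)}(\log_{\lambda_j}2)^2$.
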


\begin{proof}
    First, we need to choose values for $q_r$ that ensure normality. 
    As we showed in Lemma~\ref{lemma:condqr}, 
    $2^{n_{r+1}-n_r+1+\frac{1}{2}\log(n_{r+1}-n_r+1)} \leq q_r$ is a sufficient condition.
    We choose $q_r = 2^{2r + 2 + \lceil \log(2r+2) \rceil}$.
    By Theorem~\ref{thm:complexity}, 
          to find $a_r$,     in the worst case 
   it is necessary  to compute $D_{r,j}(c)$ for each $j$ between $1$ and $\omega(r)$ and  for each $c$ between $0$ and $ q_r-1$
 and each  $D_{r,j}$ requires $O(\tau_{r,j}^2)$ operations.
 Then, the number of operations to find $a_r$   is in  $O(q_r \sum_{j=1}^{\omega(r)} \tau_{r,j}^2)$, because
   \medskip

    $ q_r \text{ is in } O(r 2^{2r})$,

     $\tau_{r,j} \text{ is in } O(r \log_{\lambda_j}2) $, and 

     $ O\Big(q_r \sum_{j=1}^{\omega(r)} \tau_{r,j}^2\Big) = O\Big(r^3 2^{2r} \sum_{j=1}^{\omega(r)} (\log_{\lambda_j} 2)^2\Big).$
\medskip
\\
     The time to compute $\alpha_r$ is essentially the time required to find  $a_r$ because
    \[
      \sum_{k=1}^r k^3 2^{2k} \sum_{j=1}^{\omega(k)} (\log_{\lambda_j} 2)^2 \leq 
       r^3 2^{2r+2}  \sum_{j=1}^{\omega(r)} (\log_{\lambda_j} 2)^2 
      \]
 which is in $      O\Big(r^3 2^{2r}  \sum_{j=1}^{\omega(r)} (\log_{\lambda_j} 2)^2\Big)$.
\end{proof}
\bigskip
\bigskip

\noindent
{\bf Acknowledgements.}
The authors are grateful to  Igor Shparlinski  for suggesting us in 2013 (email communication) to determine the computational complexity 
of Levin's constructions of absolutely normal numbers  and  he explicitely asked  whether  the delivery of digits was in polynomial time.
The two  authors are members of the Laboratoire International Associ\'e INFINIS, CONICET/Universidad de Buenos Aires–CNRS/Universit\'e Paris Diderot.

\bibliographystyle{plain}
\bibliography{levin}

\begin{thebibliography}{10}

\bibitem{BecherBugeaudSlaman2013}
V.~Becher, Y.~Bugeaud, and T.~Slaman.
\newblock On simply normal numbers to different bases.
\newblock {\em Mathematische Annalen}, page in press, 2015.
\newblock http://dx.doi.org/10.1007/s00208-015-1209-9.

\bibitem{BecherFigueira2002}
V.~Becher and S.~Figueira.
\newblock An example of a computable absolutely normal number.
\newblock {\em Theoretical Computer Science}, 270:947--958, 2002.

\bibitem{BecherFigueiraPicchi2007}
V.~Becher, S.~Figueira, and R.~Picchi.
\newblock Turing's unpublished algorithm for normal numbers.
\newblock {\em Theoretical Computer Science}, 377:126--138, 2007.

\bibitem{poly}
V.~Becher, P.A. Heiber, and T.~Slaman.
\newblock A polynomial-time algorithm for computing absolutely normal numbers.
\newblock {\em Information and Computation}, 232:1--9, 2013.

\bibitem{liouville}
V.~Becher, P.A. Heiber, and T.~Slaman.
\newblock A computable absolutely normal {L}iouville number.
\newblock {\em Mathematics of Computation}, 84(294), 2015.

\bibitem{BecherSlaman2014}
V.~Becher and T.~Slaman.
\newblock On the normality of numbers to different bases.
\newblock {\em Journal of the London Mathematical Society}, 90(2):472--494,
  2014.

\bibitem{blumshubsmale}
Lenore Blum, Mike Shub, and Steve Smale.
\newblock On a theory of computation and complexity over the real numbers:
  {NP}- completeness, recursive functions and universal machines.
\newblock {\em Bull. Amer. Math. Soc. (N.S.)}, 21(1):1--46, 07 1989.

\bibitem{borel1909}
\'{E}. Borel.
\newblock Les probabilit\'{e}s d'enombrables et leurs applications
  arithm\'{e}tiques.
\newblock {\em Supplemento di Rendiconti del Circolo Matematico di Palermo},
  27:247--271, 1909.

\bibitem{bugeaud2012}
Yann Bugeaud.
\newblock {\em Distribution Modulo One and {D}iophantine Approximation}.
\newblock Number 193 in Cambridge Tracts in Mathematics. Cambridge University
  Press, Cambridge, UK, 2012.

\bibitem{champernowne}
D.~Champernowne.
\newblock The construction of decimals normal in the scale of ten.
\newblock {\em J. London Math. Soc.}, s1-8(4):254--260, 1933.

\bibitem{downeyhirschfeldt}
Rodney Downey and Denis Hirschfeldt.
\newblock {\em Algorithmic Randomness and Complexity}.
\newblock Springer, 2010.

\bibitem{drmotatichy1997}
M.~Drmota and R.~Tichy.
\newblock {\em Sequences, Discrepancies and Applications}.
\newblock Lecture Notes in Mathematics, Vol. 1651. Springer-Verlag, 1997.

\bibitem{galgal}
I.~S. Gal and L.~Gal.
\newblock The discrepancy of the sequence $(2^n x)$.
\newblock {\em Indag. Math.}, 26:129--143, 1964.

\bibitem{koksma1950}
J.F.Koksma.
\newblock Some theorems on {D}iophantine inequalities.
\newblock {\em Math. Cent. Amsterdam Scriptum}, 5, 1950.

\bibitem{kuipers}
L.~Kuipers and H.~Niederreiter.
\newblock {\em Uniform distribution of sequences}.
\newblock Dover Publications, Inc., New York, 2006.

\bibitem{lebesgue}
H.~Lebesgue.
\newblock Sur certains d\'emonstrations d' existence.
\newblock {\em Bulletin de la Soci\'{e}t\'{e} Math\'{e}matique de France},
  45:127--132, 1917.

\bibitem{levin1975}
M.~B. Levin.
\newblock On uniform distribution of the sequence $\{\alpha \lambda^x\}$.
\newblock {\em Matem Sb. (N.S.)}, 98(140):207--222, 1975.
\newblock Translation: Math. USSR-Sb. 27(2): 183--197, (1975).

\bibitem{levin}
M.~B. Levin.
\newblock On absoluteley normal numbers.
\newblock {\em Vestnik Moscov. Univ. ser. I, Mat-Meh}, 1:31--37, 87, 1979.
\newblock English translation in Moscow Univ. Math. Bull., 34 (1979), no. 1,
  32-39.

\bibitem{levin99}
M.~B. Levin.
\newblock On the discrepancy estimate of normal numbers absoluteley normal
  numbers.
\newblock {\em Acta Arithmetica Warzawa}, 88:99--111, 1999.

\bibitem{madritschtichy2015}
Manfred Madritsch and Robert Tichy.
\newblock Dynamical systems and uniform distribution of sequences.
\newblock {\em arXiv:1501.07411v1}, 2015.

\bibitem{schiffer}
J.~Schiffer.
\newblock Discrepancy of normal numbers.
\newblock {\em Acta Arith.}, 47:175--186, 1986.

\bibitem{schmidt}
W.~M. Schmidt.
\newblock \"{U}ber die {N}ormalit{\"a}t von {Z}ahlen zu verschiedenen {B}asen.
\newblock {\em Acta Arithmetica}, 7:299--309, 1961/1962.

\bibitem{sierpinski}
W.~Sierpi\'{n}ski.
\newblock D\'{e}monstration \'{e}l\'{e}mentaire du th\'{e}or\`{e}me de {M}.
  {B}orel sur les nombres absolument normaux et d\'{e}termination effective
  d'un tel nombre.
\newblock {\em Bulletin de la Soci\'{e}t\'{e} Math\'{e}matique de France},
  45:132--144, 1917.

\bibitem{turing}
Alan Turing.
\newblock A note on normal numbers.
\newblock In J.L.Britton, editor, {\em Collected Works of A.M.\ Turing: {Pure
  Mathematics}}, pages 117--119. North Holland, Amsterdam, 1992.
\newblock with notes of the editor in 263--265.

\end{thebibliography}
\end{document}